\definecolor{BLUE_D}{HTML}{29ABCA}
\newcommand{\cool}[1]{\mathrm{CL}(#1)}
\newcommand{\diam}{\mathrm{diam}}
\newcommand{\border}{N}
\newtheorem{theorem}{Theorem}
\newtheorem{lemma}[theorem]{Lemma}
\newtheorem{corollary}[theorem]{Corollary}
\newcommand{\ceil}[1]{\left\lceil#1\right\rceil}
\title{How to cool a graph}
\author[A.\ Bonato]{Anthony Bonato}
\author[T.G.\ Marbach]{Trent G.\ Marbach}
\author[H.\ Milne]{Holden Milne}
\author[T.\ Mishura]{Teddy Mishura}
\address[A1,A2,A3,A4]{Toronto Metropolitan University, Toronto, Canada}
\email[A1]{(A1) abonato@torontomu.ca}
\email[A2]{(A2) trent.marbach@torontomu.ca}
\email[A3]{(A3) holden.milne@torontomu.ca}
\email[A4]{(A4) tmishura@torontomu.ca}
\begin{document}

\keywords{localization number, limited visibility, pursuit-evasion games, isoperimetric inequalities, graphs}
\subjclass{05C57,05C12}

\maketitle

\begin{abstract}

We introduce a new graph parameter called the cooling number, inspired by the spread of influence in networks and its predecessor, the burning number. The cooling number measures the speed of a slow-moving contagion in a graph; the lower the cooling
number, the faster the contagion spreads. We provide tight bounds on the cooling number via a graph's order and diameter. Using isoperimetric results, we derive the cooling number of Cartesian grids. The cooling number is studied in graphs
generated by the Iterated Local Transitivity model for social networks. We conclude with open problems.

\end{abstract}

\section{Introduction}

The spread of influence has been studied since the early days of modern network science; see \cite{DR,KJT1,KJT2,RD}. From the spread of memes and disinformation in social networks like X, TikTok, and Instagram to the spread of viruses such as COVID-19 and influenza in human contact networks, the spread of influence is a central topic. Common features of most influence spreading models include nodes infecting their neighbors, with the spread governed by various deterministic or stochastic rules. The simplest form of influence spreading is for a node to infect all of its neighbors.

Inspired by a desire for a simplified, deterministic model for influence spreading and by pursuit-evasion games and processes such as Firefighter played on graphs, in \cite{BRJ1,BRJ2}, burning was introduced. Unknown to the authors then, a similar problem was studied in the context of hypercubes much earlier by Noga Alon \cite{A}. In burning, nodes are either burning or not burning, with all nodes initially labeled as not burning. Burning plays out in discrete rounds or time-steps; we choose one node to burn in the first round. In subsequent rounds, neighbors of burning nodes themselves become burning, and in each round, we choose an additional source of burning from the nodes that are not burned (if such a node exists).  Those sources taken in order of selection form a \textit{burning sequence}. Burning ends on a graph when all nodes are burning, and the minimum length of a burning sequence is the \emph{burning number} of $G$, denoted $b(G)$.

Much is now known about the burning number of a graph. For example, burning a graph reduces to burning a spanning tree. The burning problem is \textbf{NP}-complete on many graph families, such as the disjoint union of paths, caterpillars with maximum degree three, and spiders, which are trees where there is a unique node of degree at least three. For a connected graph $G$ of order $n$, it is known that $b(G) \le \sqrt{4n/3} + 1,$ and it is conjectured that the bound can be improved to $\lceil\sqrt{n}\rceil$ (which is the burning number of a path of order $n$). For more on burning, see the survey \cite{AB1} and book \cite{AB2}.

Burning models explosive spread in networks, as in the case of a rapidly spreading meme on social media. However, as is the case for viral outbreaks such as COVID-19, while the spread may happen rapidly based on close contact, it can be mitigated by social distancing and other measures such as ventilation and vaccination. In a certain sense, a dual problem to burning is how to slow an infection as much as possible, as one would attempt to do in a pandemic. For this, we introduce a new contact process called cooling. Cooling spreads analogous to burning, with cooled neighbors spreading their infection to neighboring nodes, and a new cooling source is chosen in each round. However, where burning seeks to minimize the number of rounds to burn all nodes, cooling seeks to \emph{maximize} the number of rounds to cool all nodes.

More formally, given a finite, simple, undirected graph G, the cooling process on $G$ is a discrete-time process. Nodes may be either \emph{uncooled} or \emph{cooled} throughout the process. Initially, in the first round, all nodes are uncooled. At each round $t \ge 1,$ one new uncooled node is chosen to cool if such a node is available. We call such a chosen node a \emph{source}. If a node is \emph{cooled}, then it remains in that state until the end of the process. Once a node is cooled in round $t,$ in round $t + 1$, its uncooled neighbors become cooled. The process ends in a given round when all nodes of $G$ are cooled. Sources are chosen in each round for which they are available.

We define the \emph{cooling number} of $G$, written $\mathrm{CL}(G)$, to be the maximum number of rounds for the cooling process to end. A \emph{cooling sequence} is the set of sources taken in order during cooling. We have that $b(G) \le \mathrm{CL}(G)$, and in some cases, the cooling number is much larger than the burning number. Note that while a choice of sources that burns the graph gives an upper bound to the burning number, a choice of sources that cools the graph gives a lower bound to the cooling number.

For all graphs with diameter at most two, we have that $b(G) = \mathrm{CL}(G)$.
Consider a cycle $C_8$ with eight nodes for an elementary example of cooling.
By symmetry, we may choose any node as the initial source. See Figure~\ref{fig:C8_cooling_example} for a cooling sequence of length four. There is no smaller cooling sequence, and so $\mathrm{CL}(C_8)=4$.

\begin{figure}
    \centering
\includegraphics[scale=0.2]{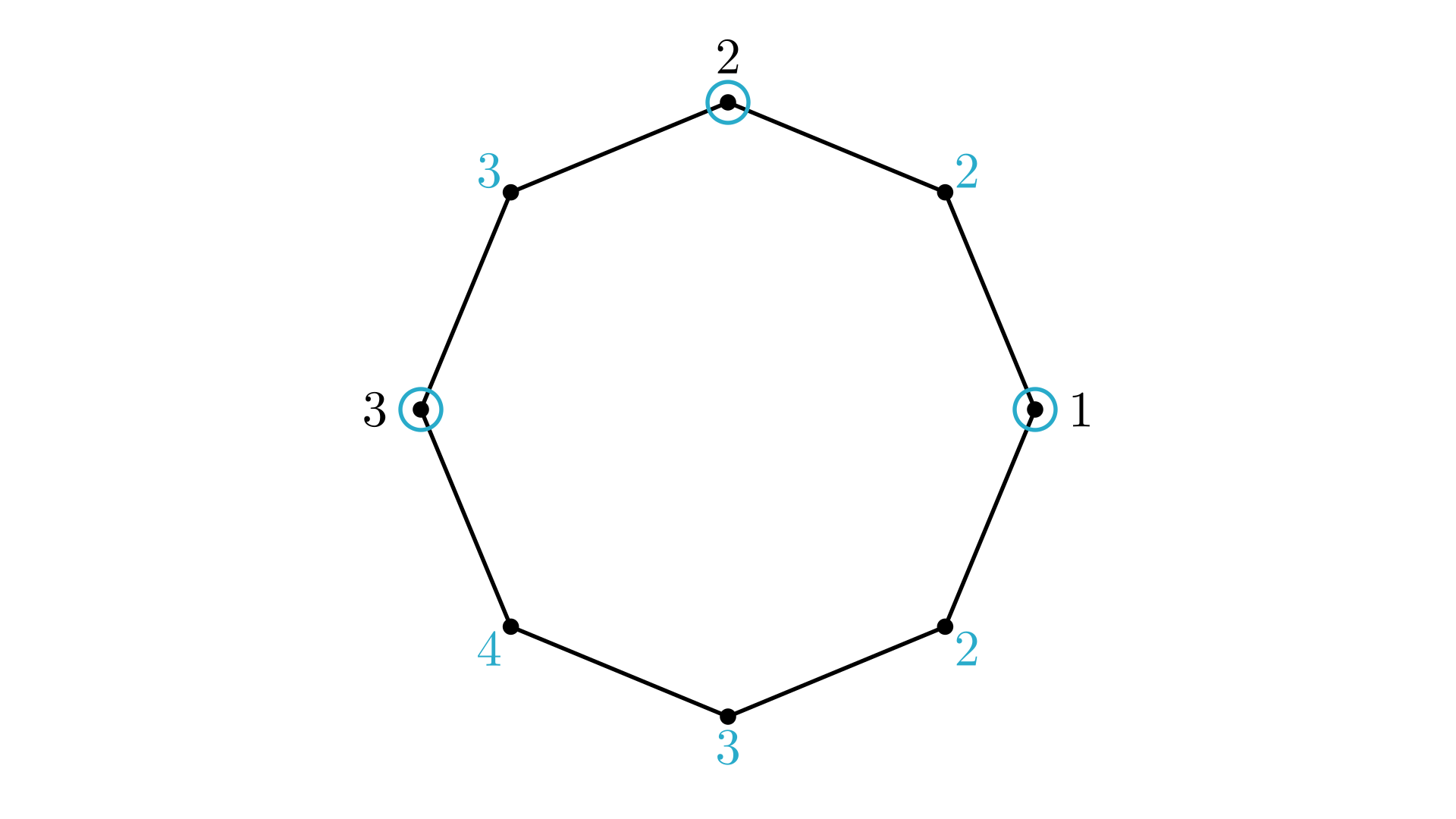}
    \caption{An example of cooling on the cycle of length $8$. Black labels indicate the nodes of the cooling sequence in increasing order. Blue labels indicate the round that the corresponding node was cooled.}
    \label{fig:C8_cooling_example}
\end{figure}

The present paper aims to introduce cooling, provide bounds, and consider its value on various graph families. In Section~2, we will discuss cooling's relation to well-known graph parameters that provide various bounds on the cooling number.
Using isoperimetric properties, we determine the cooling number of Cartesian grid graphs in the next section.
The Iterated Local Transitivity ($\mathrm{ILT}$) model, introduced in \cite{ilt} and further studied in \cite{hyper,tourp,ilm,directed}, simulates structural properties in complex networks emerging from transitivity.
The $\mathrm{ILT}$ model simulates many properties of social networks.
For example, as shown in \cite{ilt}, graphs generated by the model densify over time, have small diameter, high local clustering, and exhibit bad spectral expansion. We derive results for cooling $\mathrm{ILT}$ graphs in Section~4 and prove that the cooling number of $\mathrm{ILT}$ graphs is dependent on the cooling number after two time-steps of the model. We finish with open problems on the cooling number.

All graphs we consider are finite, simple, and undirected. We only consider connected graphs unless otherwise stated. For further background on graph theory, see \cite{west}.

\section{Bounds on the cooling number}

As a warm-up, we consider bounds on the cooling number in terms of various graph parameters. We apply these to derive the cooling number of various graph families, such as paths, cycles, and certain caterpillars. The following elementary theorem bounds the cooling number via a graph's order.

\begin{theorem} \label{thm:upper_n_div_2}
For a graph $G$ on $n$ nodes, we have that \[\cool{G}\leq \Big\lceil \frac{n+1}{2}\Big\rceil.\]
\end{theorem}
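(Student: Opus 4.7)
The plan is to count newly cooled vertices round by round to lower-bound the growth rate of the cooled set. Let $k = \cool{G}$ be the number of rounds taken by a cooling sequence that attains the cooling number, let $C_t$ denote the set of cooled nodes at the end of round $t$, with the convention $C_0 = \emptyset$, and set $a_t = |C_t| - |C_{t-1}|$. The aim is to show $a_1 = 1$, $a_t \ge 2$ for $2 \le t \le k-1$, and $a_k \ge 1$, from which summing will give $n \ge 2k - 2$ and the stated bound.

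In round $1$ the only change is that the first source is cooled, so $a_1 = 1$. For an intermediate round $t$ with $2 \le t \le k-1$, the process has not terminated, so $C_{t-1}$ is a proper, nonempty subset of $V(G)$. Since $G$ is connected, some edge leaves $C_{t-1}$, so the spreading step cools at least one new node in round $t$. Moreover, because $t < k$, there remain uncooled nodes after this round's spreading, and a new source is then chosen from among them; this source is necessarily distinct from any node cooled by spreading in the same round, so $a_t \ge 2$. In the terminal round $k$, at least one node is cooled (by spreading, or by being chosen as a source, or both), so $a_k \ge 1$.

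Putting these together,
\[
n \;=\; \sum_{t=1}^{k} a_t \;\ge\; 1 \;+\; 2(k-2) \;+\; 1 \;=\; 2k - 2,
\]
so $k \le (n+2)/2$, which for integer $k$ is equivalent to the desired bound $k \le \lceil (n+1)/2 \rceil$. The small cases $k \le 2$ are immediate from $a_1 = 1$ and $a_2 \ge 1$, and no appeal to optimality of the strategy was used, so the bound applies to every admissible cooling sequence.

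The step I expect to be most delicate is the claim $a_t \ge 2$ in an intermediate round, since it requires that the new source and the vertices cooled by spreading in the same round are disjoint. This in turn rests on the convention that a source is drawn from the nodes still uncooled after the round's spreading step has been resolved, which is a nonempty set precisely when $t < k$; once that is pinned down, the rest is arithmetic.
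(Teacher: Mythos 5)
Your proof is correct and uses essentially the same argument as the paper: in every round one new source is cooled and (after the first round, by connectivity) spreading cools at least one further node, so the cooled set grows by at least two per intermediate round. The paper organizes the count slightly differently (showing at least $2\lceil\frac{n+1}{2}\rceil-1\ge n$ nodes are cooled by round $\lceil\frac{n+1}{2}\rceil$, whereas you sum increments to get $n\ge 2k-2$), but both yield the same bound and your handling of the first and last rounds is, if anything, more careful.
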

\begin{proof}
One uncooled node is cooled from the cooling sequence during each round, except during the last round if all nodes are cooled.
The cooling will spread to at least one additional uncooled node per round, except for the first round.
This implies that there are $$\Big\lceil \frac{n+1}{2}\Big\rceil + \Big\lceil \frac{n+1}{2}\Big\rceil-1 \geq n$$ cooled nodes at the end of round $\lceil \frac{n+1}{2}\rceil$, and the result follows.
\end{proof}

We next bound the cooling number by the diameter.

\begin{theorem} \label{thm:upper_diam_cooling}
For a graph $G$, we have that \[\Big\lceil\frac{\diam(G)+2}{2}\Big\rceil \le \cool{G}\leq \diam(G)+1.\]
\end{theorem}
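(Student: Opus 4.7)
The plan is to prove the two inequalities separately.

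For the upper bound $\cool{G} \le \diam(G) + 1$, I would observe that the first source $s_1$ alone cools every vertex by the end of round $\diam(G) + 1$: a vertex at distance $r$ from $s_1$ is reached by spread in round $r + 1$, and every vertex lies within distance $\diam(G)$ of $s_1$. Picking additional sources in later rounds only enlarges the cooled set, so the process must terminate by round $\diam(G) + 1$ regardless of how those later sources are chosen.

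For the lower bound $\cool{G} \ge \lceil(\diam(G)+2)/2\rceil$, I would exhibit an explicit cooling sequence. Writing $d = \diam(G)$, fix $u, v$ with $d(u,v) = d$ and a shortest $u$--$v$ path $u = w_0, w_1, \ldots, w_d = v$. Because this path is a geodesic in $G$, the triangle inequality forces $d(w_i, w_j) = |i-j|$ even when $G$ has shortcut edges outside the path. Set $T = \lceil(d+2)/2\rceil - 1$ and, for $k = 1, 2, \ldots, T$, declare the round-$k$ source to be $s_k := w_{2k-2}$ (a parity check gives $2T - 2 < d$, so each $s_k$ really lies on the path).

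Two checks are needed. First, $s_k$ is legally uncooled at the moment of selection in round $k$: the currently cooled region consists of vertices within distance $k - j$ of some prior source $s_j$, and since $d(s_k, s_j) = 2(k-j) > k - j$, the vertex $s_k$ escapes every such ball. Second, $v = w_d$ remains uncooled at the end of round $T$: since $d(v, s_j) = d - 2j + 2$, cooling $v$ through $s_j$ by round $T$ would require $j \ge d + 2 - T$; but $d + 2 - T > T$ holds in both parities of $d$, so no $j \le T$ works. Thus the process cannot have terminated by the end of round $T$, giving $\cool{G} \ge T + 1 = \lceil(d+2)/2\rceil$.

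The main pitfall to watch is the timing of source selection relative to in-round spreading: by the time round $k$ opens, sources from earlier rounds have already propagated one further step, and one must verify the intended $s_k$ has not just been swept up. The geodesic rigidity $d(w_i, w_j) = |i-j|$ is precisely what keeps each $s_k$ just outside the current cooled region, and it is also what delays cooling from reaching $v$ until round $\lceil(d+2)/2\rceil$.
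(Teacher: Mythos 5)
Your proof is correct and takes essentially the same approach as the paper: the identical eccentricity argument for the upper bound, and for the lower bound the same cooling sequence of sources $w_0, w_2, w_4,\ldots$ placed at every other vertex of a diametral geodesic, with the same distance computations showing each source is legally uncooled. The only difference is bookkeeping: you stop one source earlier and argue that $w_d$ is still uncooled at the end of round $T$, whereas the paper places one further source in round $T+1$; both yield $\cool{G}\ge\lceil(\diam(G)+2)/2\rceil$.
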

\begin{proof}
For the upper bound, if some node $v$ is cooled during the first round, then all nodes of distance at most $i$ from $v$ will be cooled by the end of round $i+1$.  All nodes have distance at most $\diam(G)$ from $v$, so all nodes will be cooled by the end of round $\diam(G)+1$.

For the lower bound, we provide a cooling sequence that cools $G$ in at least $\Big\lceil\frac{\diam(G)+2}{2}\Big\rceil$ rounds.
Let $(v_0, v_1,\ldots, v_d)$ be a path of diameter length in $G$. The cooling sequence will be $\left(v_{2i-2} : 1 \leq i \leq \Big\lceil\frac{\diam(G)+2}{2}\Big\rceil\right)$.

Assume that at the start of round $i\geq 2$, each cooled node has distance at most $2i-4$ from $v_0$.
After the cooling spreads in this round, every cooled node has distance at most $2i-3$ from $v_0$. As such, the node $v_{2i-2}$ is uncooled and a possible choice for the next node in the cooling sequence.
This node is now cooled, so starting round $i+1$, each cooled node has distance at most $2(i+1)-4$ from $v_0$.
This sets up the recursion, where we note that the condition holds at the start of the second round. Since $2\Big\lceil\frac{\diam(G)+2}{2}\Big\rceil-2 \leq \diam(G)+1$, the sequence provided is a cooling sequence.
\end{proof}

We now apply these bounds to give that for the path $P_n$ of order $n,$  \[\cool{P_n}=\Big\lceil\frac{n+1}{2}\Big\rceil = \Big\lceil\frac{\diam(P_n)+2}{2}\Big\rceil.\]
The upper bound follows from Theorem~\ref{thm:upper_n_div_2} and the lower bound from Theorem~\ref{thm:upper_diam_cooling}.
In particular, the upper bound of Theorem~\ref{thm:upper_n_div_2} is tight.
In passing, we note (with proof omitted) that for a cycle $C_n$, $\cool{C_n} = \ceil{\frac{n+2}{3}}$.

Using a more complicated example, we can show that the upper bound of Theorem~\ref{thm:upper_diam_cooling} is also tight. Define the \emph{complete caterpillar} of length $d$, \(\mathrm{CC}_d\), as the graph formed by appending one node to each non-leaf node of \(P_d\). We call the nodes of \(P_d\) in \(\mathrm{CC}_d\) the \emph{spine} of the caterpillar. Note that $\mathrm{CC}_d$ has $n=2d-2$ nodes.

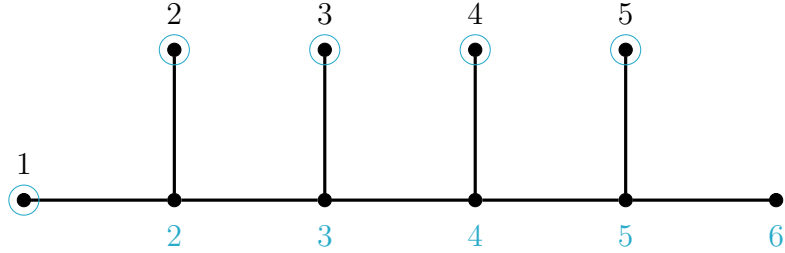
\begin{figure}[h]
\centering
\begin{tikzpicture}
\begin{scope}[every node/.style={circle,draw,fill=black,inner sep=0pt,minimum size=5pt}]
    \node (A) at (0,0) {};
    \node (B) at (2,0) {};
    \node (C) at (4,0) {};
    \node (D) at (6,0) {};
    \node (E) at (8,0) {};
    \node (F) at (10,0) {};
    \node (G) at (2,2) {};
    \node (H) at (4,2) {};
    \node (I) at (6,2) {};
    \node (J) at (8,2) {};
\end{scope}

\begin{scope}[>={Stealth[black]},
              every node/.style={fill=black,circle},
              every edge/.style={draw=black,very thick}]
    \path [-] (A) edge (B);
    \path [-] (B) edge (C);
    \path [-] (C) edge (D);
    \path [-] (D) edge (E);
    \path [-] (E) edge (F);
    \path [-] (B) edge (G);
    \path [-] (C) edge (H);
    \path [-] (D) edge (I);
    \path [-] (E) edge (J);
    \path[draw=BLUE_D] (A) circle[radius=0.2];
    \path[draw=BLUE_D] (G) circle[radius=0.2];
    \path[draw=BLUE_D] (H) circle[radius=0.2];
    \path[draw=BLUE_D] (I) circle[radius=0.2];
    \path[draw=BLUE_D] (J) circle[radius=0.2];
\end{scope}
\begin{scope}
    \coordinate (A) at (0,0);
    \node at (A) [above = 2mm of A] {$1$};
    \coordinate (B) at (2,0);
    \node at (B) [below = 2mm of B,BLUE_D] {$2$};
    \coordinate (C) at (4,0);
    \node at (C) [below = 2mm of C,BLUE_D] {$3$};
    \coordinate (D) at (6,0);
    \node at (D) [below = 2mm of D,BLUE_D] {$4$};
    \coordinate (E) at (8,0);
    \node at (E) [below = 2mm of E,BLUE_D] {$5$};
    \coordinate (F) at (10,0);
    \node at (F) [below = 2mm of F,BLUE_D] {$6$};
    \coordinate (G) at (2,2);
    \node at (G) [above = 2mm of G] {$2$};
    \coordinate (H) at (4,2);
    \node at (H) [above = 2mm of H] {$3$};
    \coordinate (I) at (6,2);
    \node at (I) [above = 2mm of I] {$4$};
    \coordinate (J) at (8,2);
    \node at (J) [above = 2mm of J] {$5$};
\end{scope}
\end{tikzpicture}
\caption{An example of cooling on the complete caterpillar of length 6. Black labels indicate the nodes of the cooling sequence in increasing order. Blue labels indicate the round that the corresponding node was cooled.}
    \label{fig:CC6_cooling_example}
\end{figure}

\begin{theorem} \label{lem:caterpillar_cooling_number}
We have that     \[\cool{\mathrm{CC}_d} = \Big\lceil\frac{n+1}{2}\Big\rceil = \diam(P_n)+1.\]
\end{theorem}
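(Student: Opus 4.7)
The plan is to establish matching upper and lower bounds of $d$. For the upper bound, $\mathrm{CC}_d$ has $n = 2d-2$ nodes, so Theorem~\ref{thm:upper_n_div_2} immediately gives $\cool{\mathrm{CC}_d} \le \lceil(2d-1)/2\rceil = d$; equivalently, the spine is a diametral path, so $\diam(\mathrm{CC}_d) = d-1$ and Theorem~\ref{thm:upper_diam_cooling} yields the same conclusion.

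For the matching lower bound, my guiding idea is to choose every new source to be a pendant leaf, so that the source's spread is ``wasted'' on a spine node that is in any case about to be cooled, thereby stalling the process by one extra round per leaf. Label the spine as $s_1, s_2, \ldots, s_d$ and write $\ell_i$ for the leaf attached to $s_i$ (for $2 \le i \le d-1$). I would analyze the cooling sequence $(s_1, \ell_2, \ell_3, \ldots, \ell_{d-1})$ and prove by induction on $i$ that, at the end of round $i$ (for $1 \le i \le d-1$), the cooled set is exactly $\{s_1, \ldots, s_i\} \cup \{\ell_2, \ldots, \ell_i\}$. In the inductive step, each previously cooled leaf $\ell_j$ with $j \le i-1$ has its unique neighbor $s_j$ already cooled, so the only new spread in round $i$ is from $s_{i-1}$ to $s_i$; simultaneously, $\ell_i$ is still uncooled (its only neighbor $s_i$ becomes cooled during round $i$, not before) and so is a legitimate source. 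After round $d-1$ only $s_d$ remains uncooled, and it is cooled in round $d$ by spread from $s_{d-1}$, so the process ends exactly in round $d$, giving $\cool{\mathrm{CC}_d} \ge d$.

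The only subtle point is a timing check: each leaf $\ell_i$ must still be uncooled at the moment it is selected. This follows from the rule that a node cooled in round $t$ spreads only in round $t+1$, so the spread from $s_i$ to $\ell_i$ is postponed to round $i+1$, by which point $\ell_i$ has already been chosen. I do not anticipate other obstacles.
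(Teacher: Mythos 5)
Your proposal is correct and follows essentially the same route as the paper: the upper bound via Theorem~\ref{thm:upper_n_div_2}, and the lower bound via the cooling sequence consisting of one spine endpoint followed by the pendant leaves in order, with the same inductive invariant that after round $i$ exactly $\{s_1,\ldots,s_i\}\cup\{\ell_2,\ldots,\ell_i\}$ is cooled. The timing check you flag (a leaf's neighbor is cooled only during the round in which the leaf is selected, so the leaf is still a legal source) is precisely the point the paper's proof relies on as well.
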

\begin{proof}
The upper bound follows from Theorem~\ref{thm:upper_n_div_2}, and so to find the lower bound, we provide a strategy that takes $\lceil\frac{n+1}{2}\rceil$ rounds. Let $(v_1, v_2,\ldots, v_{d})$ be the spine of the caterpillar, and let $v_i'$ be the additional node that is appended to $v_i$, for $2 \leq i \leq d-1$. The cooling sequence will be $v_1$ followed by $(v_{i}' : 2 \leq i \leq d-1)$.

On round $1$, the first node in the cooling sequence $v_1$ is cooled.
At the start of round $i\geq 2$, the nodes $\{v_1, \ldots, v_{i-1}\} \cup \{v_2', \ldots, v_{i-1}'\}$ have been cooled.
The cooling spreads, which cools only the node $v_i$.
The next node in the cooling sequence is $v_i'$.
At the start of round $i+1$, the nodes $\{v_1, \ldots, v_{i}\} \cup \{v_2', \ldots, v_{i}'\}$ have been cooled. This argument recursively repeats and ends on round $d$ when $v_d$ is cooled as cooling spreads.
We then have that \(\cool{\mathrm{CC}_d} = d =  \lceil\frac{n+1}{2}\rceil\), as required.
\end{proof}

We finish the section by noting that determining the cooling number of certain graph families appears challenging. Even for spiders in general, determining the exact cooling number is not obvious. The following result provides bounds and exact values of the cooling numbers of certain spiders. We refer to the paths attached to the root node of a spider as its \emph{legs}.

\begin{theorem} \label{thm:spider_log}
Let $T$ be a spider with $2m$ legs, each of length $r$.  If we have that $m < \lceil \log_2{r+1}\rceil$, then
    \[\cool{T}\geq 2 \sum_{1 \leq i \leq m} \Big\lfloor \frac{r+1}{2^i} \Big\rfloor \sim (1-1/2^m)2r.\]
    Otherwise,
     \[\cool{T} = \diam(S) +1.\]
\end{theorem}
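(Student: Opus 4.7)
The plan is to first dispatch the upper bound of case 2 using Theorem~\ref{thm:upper_diam_cooling}: since $\diam(T) = 2r$, that theorem gives $\cool{T} \le 2r+1$. Both lower bounds are then proved by exhibiting an explicit cooling sequence.

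Label the legs $L_1, L_2, \ldots, L_{2m}$ and write $u_k^{(j)}$ for the node of $L_j$ at distance $k$ from the root. I would designate $u_r^{(2m)}$ as the ``last cooled'' node and target a process lasting $T$ rounds, where $T = 2r+1$ in case 2 and $T = 2\sum_{i=1}^m\lfloor(r+1)/2^i\rfloor$ in case 1. Round 1 sources the tip $u_r^{(1)}$, which lies at distance $2r$ from $u_r^{(2m)}$, so the wave from it reaches $u_r^{(2m)}$ exactly at round $2r+1$. Subsequent sources are added in $m$ phases: in phase $i$, the pair $(L_{2i-1}, L_{2i})$ contributes $2\lfloor(r+1)/2^i\rfloor$ sources, placed along each leg of the pair at positions spaced $2^i$ apart. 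The key scheduling constraint is that each source $s$ added at round $t_s$ satisfies the non-interference bound $d(s, u_r^{(2m)}) \ge T+1-t_s$, guaranteeing that its wave does not reach $u_r^{(2m)}$ before round $T$. Summing the phase contributions yields $2\sum_{i=1}^m\lfloor(r+1)/2^i\rfloor$ added rounds, which is the case 1 bound; in case 2, the condition $m \ge \lceil\log_2(r+1)\rceil$ means the binary halving exhausts every position along each leg, the sum saturates at $2r$, and the total matches $2r+1$.

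The main obstacle is verifying that each scheduled source is uncooled when selected. Earlier spread waves eat into each leg, so I would check inductively that the positions reserved for phase $i$ stay on the ``frontier'' ahead of the waves from phases $1, \ldots, i-1$. The spacing $2^i$ in phase $i$ is chosen precisely so that phase-$i$ wavefronts meet each other, and meet the earlier ones, just in time, without premature overlap and without leaving gaps. The threshold $\lceil\log_2(r+1)\rceil$ should then emerge as the number of halvings needed to reduce $r+1$ to one, which is exactly how many phases are required to fill out every usable position along a leg; before that threshold one obtains the geometric lower bound, and at or beyond the threshold one matches the diameter-based upper bound.
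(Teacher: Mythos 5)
Your proposal correctly dispatches the case-2 upper bound via Theorem~\ref{thm:upper_diam_cooling}, but the lower-bound construction has gaps that go beyond unfinished bookkeeping. First, you obtain the factor of $2$ by pairing legs $(L_{2i-1},L_{2i})$ and placing $\lfloor (r+1)/2^i\rfloor$ sources on each leg of the pair, spaced $2^i$ apart. The difficulty you yourself flag --- that each scheduled source must still be uncooled when chosen --- is the entire content of the theorem, and your scheme does not survive it: the first source (a leaf at distance $r$ from the head) forces the head to be cooled by round $r+1$, after which a wave sweeps down \emph{every} leg at unit speed. A source chosen at round $t$ must therefore lie at distance greater than $t-(r+1)$ from the head, so the inner positions (distance roughly $2^i$, $2\cdot 2^i,\dots$ from the head) reserved for legs $L_{2i-1},L_{2i}$ are already cooled by the head-wave well before phase $i$ reaches them once $i\geq 3$; placing the phase-$i$ sources from the tip inward instead fails for the same reason at the end of the phase. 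Second, designating $u_r^{(2m)}$ as the last cooled node is incompatible with putting sources on $L_{2m}$ in phase $m$: any source on that leg at distance $\delta$ from the head cools $u_r^{(2m)}$ at round $t_s+r-\delta$, and the constraint $t_s\geq r+1+\delta$ needed to avoid this is exactly the regime in which that source is already cooled by the head-wave. Third, the case-2 arithmetic is wrong: $\sum_{i\geq 1}\lfloor (r+1)/2^i\rfloor = r+1-s_2(r+1)$, where $s_2$ is the binary digit sum, so the sum does \emph{not} saturate at $r$ unless $r+1$ is a power of two, and the bound $\cool{T}\geq 2r+1$ cannot be extracted from the source count alone.

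The paper's construction avoids all three problems by organizing the process around the round at which the head is cooled, and by obtaining the factor of $2$ differently: each of only $m'=\min\{m,\lceil\log_2 (r+1)\rceil\}$ legs is used \emph{twice}, first from the tip inward (before the head is cooled, with subphase lengths $\lfloor (r+1)/2^{m'+1-i}\rfloor$ increasing geometrically so that no tip-wave reaches the head prematurely) and then from the head outward (with lengths $\lfloor (r+1)/2^{i}\rfloor$ decreasing geometrically so that the outward sources stay ahead of both the already-cooled tip region and the head-wave). Crucially, at least $m'$ legs carry no sources at all; in case 2 the tip of such a leg is cooled only at round $(r+1)+r=2r+1$, which is what delivers $\cool{T}=\diam(T)+1$ there. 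If you want to salvage your pairing idea, you would need to redesign the placement so that the later phases work outward from the head after round $r+1$ and reserve at least one source-free leg for the terminal node.
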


\begin{proof}
We partition the cooling process into two major phases: before the head of the spider is cooled and after it is cooled.
Let \[m' = \min\{m,\lceil \log_2{r+1}\rceil\}.\]  We further split each phase into $m'$ subphases.

We start in the first phase. Suppose we enter subphase $i$, for $i$ from $1$ up to $m'$. This subphase runs for $\Big\lfloor \frac{r+1}{2^{(m'+1-i)}}\Big\rfloor$ rounds. In each round, we add the uncooled node in leg $i$ furthest from the head to the cooling sequence. Note that after subphase $i$, each leg $i'<i$ will have at most $\frac{r+1}{2^{(m'-i)}}\leq r$ cooled nodes.
At the end of this phase, there are $m'$ legs with at most $r$ cooled nodes, and the remaining at least $m'$ legs do not have any cooled nodes.
Note that the head of the spider has not yet been cooled.

The next phase begins. Suppose we enter subphase $i$ of the second phase, for $i$ from $1$ up to $m'$.  This subphase runs for $\lfloor \frac{r+1}{2^{i}}\rfloor$ rounds.
In each round, we add the uncooled node in leg $i$ to the cooling sequence that is closest to the head. Note that after subphase $i$, each leg $i' \leq i$ can be assumed to be completely cooled, while legs $i'>i$ will have $\sum_{1 \leq j \leq i} \big(\lfloor \frac{r+1}{2^{j}} \rfloor \big)-1  < r$ cooled nodes (recalling that the cooling spreads at the start of a round).

A total of $2\sum_{1 \leq i \leq m'} \big(\lfloor \frac{r+1}{2^{i}} \rfloor \big)$ rounds have been played, completing the result when $m< \lceil \log_2{r+1}\rceil$. If $m \geq \lceil \log_2{r+1}\rceil$, then note that the head was cooled in round $r+1$ and that leg $m$ did not contain a node in the cooling sequence. Therefore, the leaf of leg $m$ was cooled on round $2r+1 = \diam(S)+1.$   \end{proof}

\section{Isoperimetric results and grids}

This section studies cooling on Cartesian grids using isoperimetric results. Burning on Cartesian grids remains a difficult problem, with only bounds available in many cases. See \cite{fence,PP} for results on burning Cartesian grids.

Suppose $G$ is a graph. For a $S \subseteq V(G)$, define its \emph{node border}, $\border(S)$, to be the set of nodes in $V(G) \setminus S$ that neighbor nodes in $S$.  We then have the \emph{node-isoperimetric parameter} of $G$ at $s$ as $\Phi_V(G,s) = \min_{S:|S|=s} |\border(S)|$, and the \emph{isoperimetric peak} of $G$ as $\Phi_V(G) = \max_s \{\Phi_V(G,s)\}$. Note that in other work, it is common to use $\delta(S)$ in place of $\border(S)$, but we use the chosen notation to prevent confusion with the minimum degree of a graph, $\delta(G)$.

Literature around node borders often either focuses on Cheeger's inequality, which is based on the ratio of $|\border(S)|$ to $|S|$, or focuses on isoperimetric inequalities, which are bounds on $\Phi_V(G,s)$ from below.
In recent work, for positive $x$ and $y$, an inequality that bounds the maximum difference between the node-isoperimetric parameter at $x$ and at $x+y$ was given while proving a distinct result (see Theorem~5 of \cite{kvis}). We provide an analogous, reworded version of this bound with full proof for completeness.

\begin{lemma}[\cite{kvis}] \label{lem:isoperi_smooth_change}
For a graph $G$ and integers $x,y\geq 0$,
\[
\Phi_V(G,x)-y \leq \Phi_V(G,x+y).
\]
\end{lemma}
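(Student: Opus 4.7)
The plan is to prove the inequality by taking a set that realizes $\Phi_V(G,x+y)$ and iteratively shrinking it to size $x$, showing that each single-vertex deletion increases the node border by at most one.

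First, I would fix a set $T\subseteq V(G)$ with $|T|=x+y$ and $|\border(T)|=\Phi_V(G,x+y)$, which exists by definition of the isoperimetric parameter. The key structural observation is the following: for any $v\in T$, if we set $T' = T\setminus\{v\}$, then $\border(T')\subseteq \border(T)\cup\{v\}$. To see this, take any $u\in \border(T')$. Then $u\notin T'$ and $u$ has a neighbor in $T'\subseteq T$. Split into cases: if $u\notin T$, then $u\in\border(T)$; and if $u\in T$, then $u\notin T'$ forces $u=v$. Consequently $|\border(T')|\le |\border(T)|+1$.

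Next I would iterate this deletion $y$ times. Starting from $T_0=T$ and removing one vertex at each step to obtain $T_1,T_2,\ldots,T_y$ with $|T_i|=x+y-i$, the observation above gives
\[
|\border(T_y)| \le |\border(T_0)| + y = \Phi_V(G,x+y) + y.
\]
Since $|T_y|=x$, the set $T_y$ is a valid candidate for the minimization defining $\Phi_V(G,x)$, and therefore
\[
\Phi_V(G,x) \le |\border(T_y)| \le \Phi_V(G,x+y) + y,
\]
which rearranges to the claimed inequality.

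There is no substantial obstacle here: the whole argument rests on the monotonicity-style observation that one vertex deletion adds at most one vertex (namely the deleted one) to the border. If anything, the only thing to be slightly careful about is the case analysis for $u\in \border(T')$ in verifying $\border(T')\subseteq \border(T)\cup\{v\}$, and the edge cases $x=0$ or $y=0$, which hold trivially since the iteration is vacuous when $y=0$ and $\Phi_V(G,0)=0$ when $x=0$.
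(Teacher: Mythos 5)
Your proposal is correct and follows essentially the same route as the paper: both start from a set realizing $\Phi_V(G,x+y)$ and use the observation that deleting a vertex $v$ can only add $v$ itself to the node border, the paper phrasing this as $\border(S_{x+y}\setminus S_y)\subseteq \border(S_{x+y})\cup S_y$ for a $y$-subset removed at once, while you iterate the single-vertex version $y$ times. The two arguments are interchangeable and your case analysis for $u\in\border(T')$ is exactly the justification the paper relies on.
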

\begin{proof}
Let $S_{x+y}$ be a set of nodes of cardinality $x+y$ with $|\border (S_{x+y})| = \Phi_V(G,x+y)$.
Note that if a node $u$ is removed from $S_{x+y}$, then the only node that may be in the border of the new set $S_{x+y} \setminus \{u\}$ that was not in the border of $S_{x+y}$ is the node $u$ itself, as the nodes in the border are neighbors of nodes in the set.
It follows that $\border(S_{x+y}\setminus \{u\}) \subseteq \border(S_{x+y}) \cup \{u\}$.

By a similar argument, if we remove any set $S_y \subseteq S_{x+y}$ of cardinality $y$ from $S_{x+y}$, we have
$\border(S_{x+y} \setminus S_y) \subseteq \border(S_{x+y}) \cup S_y$.
But this gives $| \border(S_{x+y} \setminus S_y)| \leq |\border(S_{x+y})|+y$, which re-arranges to yield
$$| \border(S_{x+y} \setminus S_y)| - y \leq |\border(S_{x+y})| = \Phi_V(G,x+y).$$
This completes the proof since $\Phi_V(G,x) \leq |\border(S_{x+y} \setminus S_y)|$.
\end{proof}

Lemma~\ref{lem:isoperi_smooth_change} can be considered a relative isoperimetric inequality.
When we define the set of cooled nodes at time $i$ as $S_i$ during cooling, this relative isoperimetric inequality yields a sequence of values, $x_i$, such that $|S_i|\geq x_i$ for all $i$, independent of the strategy that was used to cool the graph.
This sequence of values derives a natural upper bound on $\cool{G}$ as follows. We define $x_1=1$, and let $$x_{i+1} = x_i + \Phi_V(G,x_i)+1.$$
Suppose $I$ is the smallest value with $x_I \geq |V(G)|$.

\begin{theorem} \label{thm:main_isoperimetry}
If $G$ is a graph, then we have that
\[
\cool{G} \leq I.
\]
\end{theorem}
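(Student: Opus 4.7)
The plan is to prove by induction that, writing $S_i$ for the set of cooled nodes at the end of round $i$ under any cooling sequence, one has $|S_i| \geq \min\{|V(G)|, x_i\}$. Once this is established, evaluating at $i = I$ gives $|S_I| \geq |V(G)|$, so the process terminates by round $I$ and $\cool{G} \leq I$ follows.

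For the base case, round $1$ cools exactly one source, so $|S_1| = 1 = x_1$. For the inductive step, suppose $|S_i| \geq x_i$ and that the process has not yet terminated. Between rounds $i$ and $i+1$, the cooled set gains every uncooled neighbor of $S_i$ (a contribution of $|\border(S_i)|$ new nodes) plus one new source chosen from the remaining uncooled nodes, so
\[
|S_{i+1}| \;\geq\; |S_i| + |\border(S_i)| + 1 \;\geq\; |S_i| + \Phi_V(G, |S_i|) + 1,
\]
using the definition of the isoperimetric parameter. If instead $S_i \cup \border(S_i) = V(G)$, then $|S_{i+1}| = |V(G)|$ and we are done.

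The delicate point, and where I expect the only real work to sit, is that $\Phi_V(G, \cdot)$ need not be monotone in its second argument, so from $|S_i| \geq x_i$ we cannot directly infer $\Phi_V(G, |S_i|) \geq \Phi_V(G, x_i)$. This is exactly what Lemma~\ref{lem:isoperi_smooth_change} is designed to absorb. Writing $y = |S_i| - x_i \geq 0$, the lemma gives $\Phi_V(G, x_i + y) \geq \Phi_V(G, x_i) - y$, hence
\[
|S_{i+1}| \;\geq\; (x_i + y) + \bigl(\Phi_V(G, x_i) - y\bigr) + 1 \;=\; x_i + \Phi_V(G, x_i) + 1 \;=\; x_{i+1},
\]
closing the induction.

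Finally, since $x_I \geq |V(G)|$ by definition of $I$, the induction yields $|S_I| \geq |V(G)|$, meaning every node is cooled by the end of round $I$ regardless of the cooling sequence chosen. Because this holds for every strategy, in particular for one realising the maximum, we conclude $\cool{G} \leq I$.
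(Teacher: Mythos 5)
Your proof is correct and follows essentially the same route as the paper: the key step in both is applying Lemma~\ref{lem:isoperi_smooth_change} with $y = |S_i| - x_i$ to compensate for the possible non-monotonicity of $\Phi_V(G,\cdot)$, the only difference being that you run a direct induction on the invariant $|S_i|\geq x_i$ while the paper phrases the same computation as a contradiction at the first round where the invariant would fail. Your explicit handling of the terminal case $S_i \cup \border(S_i) = V(G)$ is a minor point the paper leaves implicit, but the substance is identical.
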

\begin{proof}
Suppose for the sake of contradiction that $\cool{G} \geq I+1$. For some optimal cooling strategy, let $S_j$ be the set of cooled nodes at the end of round $j$.  We then have that $|S_1|=1$.
Assume that there is some round $J$ where $|S_{J-1}|\geq x_{J-1}$ but $|S_{J}|<x_{J}$.
Define $y = |S_{J-1}|-x_{J-1} \geq 0$.
It follows by the definition of the cooling process, the definition of $\Phi_V$, and by Lemma~\ref{lem:isoperi_smooth_change} that
\begin{eqnarray*}
|S_J| & = &|S_{J-1}| + |\border(S_{J-1})| +1 \\
& \geq & x_{J-1} + y + \Phi_V(G,x_{J-1}+y) + 1 \\
& \geq & x_{J-1} + \Phi_V(G,x_{J-1}) -y + y+1 \\
& = & x_{J-1}+\Phi_V(G,x_{J-1})+1 = x_{J}.
\end{eqnarray*}
This contradicts the fact that $|S_J| < x_J$, and so we are done.
\end{proof}

The (Cartesian) grid graph of length $n$, written $G_n,$ is the graph with nodes of the form $(u_1,u_2)$, where $1\le u_1,u_2 \le n$, and an edge between $(u_1,u_2)$ and $(v_1,v_2)$ if $u_1=v_1$ and $|u_2-v_2|=1$, or if $u_2=v_2$ and $|u_1-v_1|=1$. The following total ordering of the nodes in the grid, called the \emph{simplicial ordering}, can be found in \cite{BI}.
For two nodes $u=(u_1, u_2)$ and $v=(v_1, v_2)$, define $u<v$ when either $u_1+u_2 < v_1 + v_2$, or $u_1 + u_2 = v_1 + v_2$ and $u_1<v_1$. Let $S_i$ be the $i$ smallest nodes under this total ordering.

\begin{lemma}[\cite{BI}] \label{lem:grid_isometric} No $i$-subset of $V(G)$ has a smaller node border than $S_i$, and so $|\border(S_i)| = \Phi_V(G,i)$.
\end{lemma}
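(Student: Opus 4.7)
The plan is to prove the lemma via a compression argument in the spirit of Bollob\'as and Leader. The goal is to show that for any $T \subseteq V(G_n)$ with $|T| = i$, the inequality $|\border(T)| \geq |\border(S_i)|$ holds, by exhibiting a finite sequence of transformations of $T$ that terminates at $S_i$ and never increases the border.

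First I would introduce a \emph{diagonal compression}. For each diagonal $D_k = \{(u_1, u_2) : u_1 + u_2 = k\}$, I would replace $T \cap D_k$ by the initial $|T \cap D_k|$ nodes of $D_k$ under the simplicial order (equivalently, by the nodes in $D_k$ with smallest first coordinate). The key technical claim is that this operation weakly decreases $|\border(T)|$. To prove it, I would decompose the operation into elementary swaps: remove a node $v = (a, k-a) \in T \cap D_k$ and add the node $u = (a-1, k-a+1) \in D_k \setminus T$. Since $u$ and $v$ lie on the same diagonal, their neighborhoods are contained in $D_{k-1} \cup D_{k+1}$; a finite case analysis on whether the nodes $(a-2, k-a+1)$, $(a-1, k-a)$, $(a, k-a-1)$, $(a+1, k-a-1)$ belong to $T$ shows that each new border node created by the swap is compensated by an old border node destroyed.

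Next I would apply an \emph{inter-diagonal compression}, shifting mass from higher-indexed diagonals to lower-indexed ones: whenever $T \cap D_k$ is a proper initial segment of $D_k$ and $T \cap D_{k+1}$ is nonempty, move the simplicially-largest element of $T \cap D_{k+1}$ into the next available slot of $D_k$. A similar case analysis establishes that this step also weakly decreases the border. Iterating both types of compression must eventually terminate, since each elementary step lexicographically lowers the sorted tuple of simplicial ranks of the elements of $T$; and any terminal configuration under both compressions is exactly $S_i$.

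The main obstacle is verifying the compression inequalities themselves: one must show, case by case, that replacing a node of $T$ with a simplicially earlier node never produces a net gain in border size. The analysis is tractable because each swap is local — it only affects the (at most) four axis-neighbors of each of the two involved nodes — but several subcases must be enumerated, according to the membership status of the potentially affected neighbors. Once these inequalities are in hand, the convergence of the compression iteration and the identification of the terminal set with $S_i$ follow by a routine termination argument, yielding $|\border(S_i)| = \Phi_V(G_n, i)$ as claimed.
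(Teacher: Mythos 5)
The paper does not actually prove this lemma; it is quoted directly from Bollob\'as and Leader \cite{BI}, so any comparison is against the standard compression proofs in the literature rather than against an in-paper argument. Your overall plan (compress toward $S_i$ without increasing the border) is the right genre of argument, but your central technical claim is false as stated: an elementary within-diagonal swap, replacing $v=(a,k-a)$ by $u=(a-1,k-a+1)$, does \emph{not} weakly decrease the border when the adjacent diagonals are not yet compressed. Concretely, in $G_4$ take $T=\{(3,1),(3,2),(4,1)\}$, so $\border(T)=\{(2,1),(2,2),(4,2),(3,3)\}$ has size $4$. Your swap on the diagonal $D_4$ removes $(3,1)$ and adds $(2,2)$, giving $T'=\{(2,2),(3,2),(4,1)\}$ with $\border(T')=\{(1,2),(2,1),(2,3),(3,1),(3,3),(4,2)\}$ of size $6$. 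The swap detaches $v$ from its neighbors $(3,2)$ and $(4,1)$ in $T$ on the adjacent diagonals, so $v$ itself becomes a new border node and $u$'s neighbors are freshly exposed; nothing compensates. The same example shows that compressing a single diagonal in isolation (here $D_4\cap T\mapsto\{(1,3)\}$) can also increase the border, from $4$ to $7$. So the monotonicity you need cannot be established swap-by-swap or diagonal-by-diagonal in an arbitrary order, and your termination argument is then moot, since the quantity you are tracking is not monotone along the iteration.

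To repair this you must make the comparison global rather than local. The classical route is to compress in the \emph{coordinate} directions (replace the intersection of $T$ with each row, and then each column, by an initial segment of that line), prove that each such compression does not increase the vertex border by comparing whole sections at once, conclude that a fully compressed set is a down-set (a ``staircase''), and only then compare down-sets to initial segments of the simplicial order. Alternatively, if you insist on diagonals, you must compress \emph{all} diagonals simultaneously and bound the border of the result using the fact that the upper and lower shadows in $D_{k\pm1}$ of an $m$-subset of $D_k$ are minimized by initial segments, together with careful bookkeeping of how those shadows overlap $T$ on the adjacent diagonals (including the edge cases at the anti-diagonal $D_{n+1}$, where the ``initial'' node of $D_k$ has first coordinate $k-n$ rather than $1$). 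As written, the proposal asserts the key inequality exactly where it fails, so it does not constitute a proof.
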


Under the simplicial ordering, adding the smallest node in $V(G)\setminus S_i$ to $S_i$ yields the set $S_{i+1}$.
The set of nodes $\border(S_i)$ is the set containing the $|\Phi_V(G,i)|$
smallest nodes that are larger than the nodes in $S_i$.
As such, $S_i \cup \border(S_i) = S_{i+\Phi_V(G,i)}$ and it also follows that $|S_i \cup \border(S_i)| = i+\Phi_V(G,i)$.

\begin{theorem} \label{thm:isoperimetric_grid_strategy}
An optimal cooling strategy for a grid graph $G_n$ is formed by choosing the next node in a cooling sequence to be the smallest node in the simplicial ordering that has not yet been cooled.
\end{theorem}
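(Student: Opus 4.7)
The plan is to verify that the simplicial strategy attains exactly $I$ rounds, where $I$ is the quantity appearing in Theorem~\ref{thm:main_isoperimetry}. Since that theorem already gives $\cool{G_n} \leq I$, matching it with a lower bound of $I$ coming from an explicit cooling sequence forces $\cool{G_n} = I$ and confirms that the simplicial strategy is optimal.

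The central step is an induction on $i$ showing that, under the simplicial strategy, the set of cooled nodes at the end of round $i$ is precisely $S_{x_i}$, where $(x_i)$ is the sequence defined just before Theorem~\ref{thm:main_isoperimetry}. The base case $i=1$ is immediate: the strategy first cools the smallest node in the simplicial order, so the cooled set is $S_1 = S_{x_1}$. For the inductive step, assume the cooled set at the end of round $i$ equals $S_{x_i}$. At the start of round $i+1$, cooling spreads to $\border(S_{x_i})$, and by Lemma~\ref{lem:grid_isometric} together with the remark immediately following it, the resulting cooled set is exactly $S_{x_i + \Phi_V(G_n, x_i)}$. The simplicial strategy then appends the smallest uncooled node, which is precisely the $(x_i + \Phi_V(G_n, x_i) + 1)$-th node in the simplicial order, producing cooled set $S_{x_{i+1}}$ as required.

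To close the argument, I note that $x_i < n^2$ for every $i < I$, so an uncooled node is always available to be chosen as the next source, making the strategy well-defined and ensuring that the process does not end before round $I$. At round $I$ we have $x_I \geq n^2$, so every node is cooled and the process terminates. Hence the simplicial strategy achieves exactly $I$ rounds, which combined with Theorem~\ref{thm:main_isoperimetry} yields $\cool{G_n} = I$ and the claimed optimality. The only mildly delicate point is the endgame of round $I$: if the spread step alone already cools every remaining node, the process still terminates in round $I$, possibly without a final source being added, but this is consistent with $\cool{G_n} = I$. No step presents a substantial obstacle, as the isoperimetric heavy lifting was done in Lemma~\ref{lem:isoperi_smooth_change}, Theorem~\ref{thm:main_isoperimetry}, and Lemma~\ref{lem:grid_isometric}.
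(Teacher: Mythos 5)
Your proposal is correct and follows essentially the same route as the paper's own proof: both show by induction that under the simplicial strategy the cooled set at the end of round $i$ is exactly $S_{x_i}$ (using the identity $S_j \cup \border(S_j) = S_{j+\Phi_V(G,j)}$ that follows from Lemma~\ref{lem:grid_isometric}), conclude that the process lasts exactly $I$ rounds so that $\cool{G_n} \geq I$, and then invoke Theorem~\ref{thm:main_isoperimetry} for the matching upper bound. Your explicit handling of why a source is always available before round $I$ is the same observation the paper makes, just stated a bit more carefully.
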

\begin{proof}
Let $U_i$ be the set of nodes cooled by the end of round $i$ when this strategy was performed.
Note that for each $i$, there exists an $x_i$ with $S_{x_i} = U_i$.
We then have that that $|U_i|=x_i$.
During the next round, the cooling will spread to $\border(U_i)$, meaning the nodes in $U_i \cup \border(U_i) = S_{x_i+\Phi_V(G,x_i)}$ are now cooled.

We then cool the smallest uncooled node, implying that exactly the nodes $S_{x_i+\Phi_V(G,x_i)+1}$ are cooled, and so $U_{i+1} = S_{x_i+\Phi_V(G,x_i)+1}$.
It follows that $$x_{i+1} = |U_{i+1}| = x_i+\Phi_V(G,x_i)+1.$$
If $I$ is the smallest value with $x_I \geq |V(G)|$, then on all rounds $i<I$, $|U_i|<|V(G)|$, and so there is still some uncooled node at the end of round $i$, and so play continues into round $I$.
The process will, therefore, terminate under this approach exactly on round $I$.
This proves that $\cool{G} \geq I$.
Theorem~\ref{thm:main_isoperimetry} finishes the proof.
\end{proof}

We explicitly determine the cooling number for Cartesian grids up to a small additive constant in the following result.

\begin{figure}
    \centering
\includegraphics[scale=0.2]{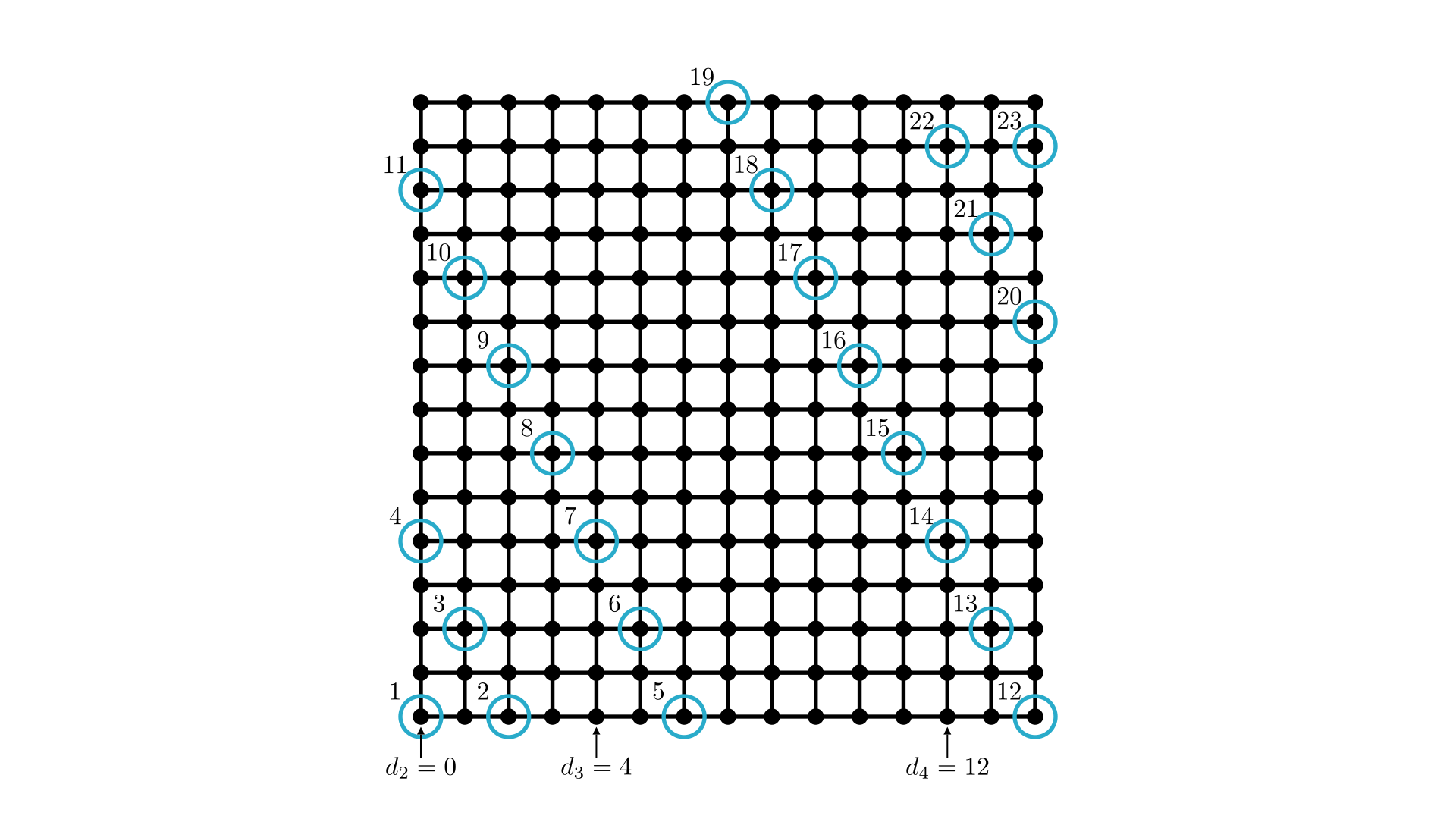}
    \caption{An example of the cooling sequence for the $15 \times 15$ grid using the strategy of Theorem~\ref{thm:cooling_grids_main}.
    The nodes on the first row with distance $d_2,d_3,d_4$ from $(1,1)$ are also indicated.
    }
    \label{fig:cooling_grids_main}
\end{figure}

\begin{theorem} \label{thm:cooling_grids_main}
For each $n\geq 1$, there is an $\varepsilon \in \{0,1,2\}$ so that
    \[
    \cool{G_n} = 2n-2\lfloor \log_2(n+3) \rfloor + \varepsilon.
    \]
\end{theorem}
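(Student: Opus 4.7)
The plan is to invoke Theorem~\ref{thm:isoperimetric_grid_strategy}, under which the greedy simplicial strategy is optimal, and then compute directly the round $I$ at which this strategy finishes. Writing $x_i$ for the size of the cooled set after round $i$, Theorem~\ref{thm:main_isoperimetry} gives the recurrence $x_{i+1} = x_i + \Phi_V(G_n, x_i) + 1$. This becomes concrete because at every round the cooled set $S_{x_i}$ has the shape $T_{k_i} \cup L_i$, where $T_{k_i}$ is the union of the first $k_i$ anti-diagonals in simplicial order and $L_i$ is a simplicial prefix of length $\ell_i$ of the next anti-diagonal $D_{k_i+1}$.

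My first step is to compute $|N(S_i)|$ by direct geometric case analysis on the state $(k_i, \ell_i)$. In the \emph{interior regime} $k_i \leq n-2$, counting new neighbours in $D_{k_i+1}$ and $D_{k_i+2}$ gives $|N(S_i)| = k_i + 1$ when $\ell_i = 0$ and $|N(S_i)| = k_i + 2$ when $\ell_i \geq 1$; in the \emph{boundary regime} $k_i \geq n-1$, where anti-diagonals have begun to shrink, the analogous count yields $|N(S_i)| = \min(k_i + 1, 2n - k_i - 1)$. My second step is to iterate the resulting state transitions $(k_i, \ell_i) \mapsto (k_{i+1}, \ell_{i+1})$. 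In the interior regime the trajectory passes through ``reset'' states $(k_0^{(j)}, 0)$ with $k_0^{(j)} = 2^{j+1} - 3$ at rounds $i_j = 2^{j+1} - j - 2$, each epoch roughly doubling $k$ and contributing one ``jump'' (a round in which $k$ advances by $2$ instead of $1$). In the boundary regime $k$ similarly grows by $1$ per round with occasional jumps of $2$ when $\ell$ overflows $|D_{k+2}|$. Since $k_I = 2n - 1$ at termination, one obtains $I = (2n - 1) - J$ where $J$ is the total number of jumps, and matching to the claimed formula amounts to verifying $J = 2\lfloor \log_2(n+3) \rfloor - 1 - \varepsilon$ for some $\varepsilon \in \{0, 1, 2\}$.

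The main obstacle is the bookkeeping at the interface of the two regimes. The last interior epoch typically does not complete before $k$ reaches $n-1$, so the transition into the boundary regime can occur at one of three distinct positions within the final partial epoch, depending on where $n$ sits within the doubling sequence $2^{j+1} - 3$. This three-way alignment is precisely what produces the three possible values of $\varepsilon$, and handling it uniformly requires a careful case analysis of the last few rounds of the process and of the overflow pattern as the cooling front recedes through the shrinking anti-diagonals toward $(n,n)$.
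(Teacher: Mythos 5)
Your overall frame agrees with the paper's starting point: both arguments rest on Theorem~\ref{thm:isoperimetric_grid_strategy}, and your ``epochs'' in the interior regime (resets at $k=2^{j+1}-3$, one saved round per epoch) are exactly the paper's ``phases'' (which track the ball of radius $d_p=2^p-4$ about $(1,1)$). Where you genuinely diverge is in the second half of the process. The paper never runs the greedy recurrence through the shrinking anti-diagonals: instead it truncates the explicit analysis once the front reaches distance about $n-2$ from $(1,1)$, obtains the lower bound $2T=2n-2\lfloor\log_2(n+3)\rfloor$ by appending the \emph{reflected} strategy $(r_i,c_i)\mapsto(n+1-r_i,n+1-c_i)$ played in reverse order, and obtains the upper bound $2T+2$ by a reversal-and-symmetry contradiction. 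That reflection trick is precisely what lets the paper avoid your ``boundary regime'' entirely, at the price of only locating $\cool{G_n}$ in a window of width $2$ (hence the $\varepsilon$). Your plan, if completed, would be stronger --- it would pin down $\varepsilon$ exactly for each $n$ --- and your state-transition bookkeeping is essentially sound as far as it goes (one caveat: the overflow dynamics are not literally ``similar'' in the two regimes; in the interior the prefix length $\ell$ gains $2$ per round against a capacity growing by $1$, while past the main diagonal $\ell$ gains only $1$ per round against a capacity shrinking by $1$, so the gap closes at a different rate and the jump spacing must be recounted from scratch rather than inherited by analogy).

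The genuine gap is that the decisive step is announced but not performed. You reduce the theorem to the identity $J=2\lfloor\log_2(n+3)\rfloor-1-\varepsilon$ for the total number of jumps, you count the interior jumps, and then you state that the boundary-regime count and the three-way interface alignment ``require a careful case analysis'' --- but that case analysis \emph{is} the theorem. Everything before it is routine; the value $\lfloor\log_2(n+3)\rfloor$ and the fact that $\varepsilon$ stays within $\{0,1,2\}$ (rather than drifting, say, when the last interior epoch is cut off just before or just after its jump) can only come out of that computation. Without carrying it out --- or replacing it by something like the paper's reflection argument, which converts the entire second half into a symmetry statement plus a two-round error bound --- the proposal does not establish the claimed formula.
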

\begin{proof}
We analyze the strategy of Theorem~\ref{thm:isoperimetric_grid_strategy} in phases, which we know is an optimal cooling strategy.
Phase $1$ is just the first round, where only the node $(1,1)$ is cooled.
Phase $p$, with $2 \leq p \leq \lfloor \log_2(n+3) \rfloor$ starts with exactly those nodes of distance at most $d_p=2^{p}-4$ from $(1,1)$ being cooled, and this phase will last for exactly $d_p+3$ rounds.

In round $t$ of phase $p$, with $1 \leq t\leq d_p+3$, the cooling spreads to all uncooled nodes of distance $d_p+t$ from $(1,1)$ and to the nodes $\{(t',d_p+t+3-t'): 1 \leq t' \leq 2t-2\}$, which each have distance $d_p+t+1$ from $(1,1)$.
The node $(2t-1,d_p+4-t)$ is then chosen to be the next node of the cooling sequence, so the set of cooled nodes is exactly those nodes with distance at most $d_p+t$ from $(1,1)$ and the nodes $\{(t',d_p+t+3-t'): 1 \leq t' \leq 2t-1\}$, which have distance $d_p+t+1$ from $(1,1)$.
At the end of round $d_p+3$, this is exactly the set of nodes of distance $2d_p+4 = 2^{p+1}-4 = d_{p+1}$, and so we may iterate this procedure until phase $\lfloor \log_2(n+3) \rfloor$.

Note that between the start and end of phase $p$, the ball of cooled nodes about $(1,1)$ grows in radius by $2^p$ even though only $2^p-1$ rounds have occurred.
Thus, if $T$ rounds have occurred from the start of play until some round during phase $p$, then the cooled nodes all sit within a ball of radius $T+p-2$ around $(1,1)$, and all the nodes in this ball will be cooled if the round is the last of phase $p$.

At the end of phase $p= \lfloor \log_2(n+3) \rfloor-1$, note that the cooled nodes form a ball of radius $r$ around $(1,1)$, where $\frac{n}{2}-2 \leq r \leq n-1$, and after phase $p$, the cooled nodes form a ball of radius $r$ around $(1,1)$, where $r \geq n$.

We will briefly discuss a different strategy that will help us bound the time the optimal cooling process will take.
Suppose we have played the above strategy for the first $\lfloor \log_2(n+3) \rfloor-1$ phases, and for phase $\lfloor \log_2(n+3) \rfloor$, we terminate the phase after the first round where a node of distance $n-2$ from $(1,1)$ was cooled.

Let $T$ denote the number of rounds that have occurred by the end of this last modified phase.
Note that $T+\lfloor \log_2(n+3) \rfloor -2 = n-2$
We construct an additional $T$ rounds for our modified strategy, which naturally split into $\lfloor \log_2(n+3) \rfloor$ phases.
If node $(r_i,c_i)$ was played on round $i$ with $1 \leq i \leq T$, then we play node $(n+1-r_i,n+1-c_i)$ on round $2T+1-i$, for $1 \leq i \leq T$.

If phase $p$ consisted of rounds $t_1$ through to $t_2$, then phase $2(\lfloor \log_2(n+3) \rfloor-1)+1-p$ consists of rounds $2T+1-t_2$ through to $2T+1-t_1$.
A similar analysis yields that each such node in the cooling sequence is uncooled before we cool it; hence, this is a cooling sequence, and cooling on $G_n$ using this modified approach lasts for at least $2T$ rounds.
If we proceed optimally, then the graph would take at least $2T$ rounds to cool.
Noting that $T+\lfloor \log_2(n+3) \rfloor -2 = n-2$, we have that $2T = 2n - 2\lfloor \log_2(n+3) \rfloor$.

Suppose, for the sake of contradiction, that the optimal approach lasts for at least $2T+3$ rounds.
Note that after $T+3$ rounds, some nodes of distance $n+1$ from $(1,1)$ must have been cooled, and all nodes of distance $n$ from $(1,1)$ must have been cooled

We now consider the reverse strategy, which first plays the last choice made in the optimal strategy, and so on.
Note that by symmetry, after the first $T$ rounds of this reverse strategy, the node that was chosen to be cooled has distance at least $n-2$ from $(n,n)$.
However, this means that the node chosen to be cooled on round $T$ of this reverse strategy has distance at most $n$ from $(1,1)$.
But going back to the original optimal strategy, this means that round $T+4$ must have been played at a distance of at most $n$ from $(1,1)$, but all of these nodes were cooled by round $T+3$.
This gives us the desired contradiction, so the optimal strategy must last for at most $2T+2$ rounds.  Therefore, the optimal strategy lasts for $ 2T $, $ 2T+1 $, or $ 2T+2 $ rounds, and the proof is complete.
\end{proof}

\section{Cooling the ILT model}

Motivated by structural balance theory, the \emph{Iterated Local Transitivity} (or \emph{ILT} model) iteratively adds transitive triangles over time; see \cite{ilt}.
Graphs generated by these models exhibit several properties observed in complex networks, such as densification, small-world properties, and bad spectral expansion.
The ILT model takes a graph $G=G_0$ as input and defines $\mathrm{ILT}(G)$ by adding a cloned node $x'$ for each node $x$ in $G$, and making $x'$ adjacent to the original node $x$ and the neighbors of $x$.
The set of cloned nodes is referred to as $\mathrm{CLONE}(G)$, and we write $\mathrm{ILT}_t(G)$ to denote the graph obtained by applying the ILT process $t$ times to $G$.
When $t=1$, we typically write $\mathrm{ILT}(G)$.
We call any graph produced using at least one iteration of the ILT process an \emph{ILT graph}.
The diameter of $\mathrm{ILT}(G)$ is the same as that of $G$ unless the diameter of $G$ is 1, in which case the diameter of $\mathrm{ILT}(G)$ is $2$.

The burning numbers of an ILT graph either equals $b(G)$ or $b(G)+1$; see \cite{BRJ2}.  Even though the order of the graphs generated by the ILT model grows
exponentially with time, the burning number of ILT graphs remains constant. We now consider the cooling number of ILT graphs.

\begin{theorem}\label{thm:ILT_only_increases}
If $G$ is a graph, then $\mathrm{CL}(\mathrm{ILT}(G)) \geq \mathrm{CL}(G)$.
\end{theorem}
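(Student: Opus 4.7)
The plan is to apply an optimal cooling sequence for $G$ directly inside $\mathrm{ILT}(G)$ and show that the cooling process restricted to the original vertex set runs exactly as it did in $G$. Let $\sigma = (v_1, v_2, \ldots, v_k)$ be an optimal cooling sequence for $G$, with $k = \cool{G}$, and write $t(u)$ for the round in which $u \in V(G)$ is cooled under $\sigma$. On $\mathrm{ILT}(G)$, I would use the strategy that picks $v_i$ as the source in round $i$ for $1 \le i \le k$, and (if any node is still uncooled) picks an arbitrary uncooled source in each subsequent round. Writing $t'(w)$ for the round in which $w \in V(\mathrm{ILT}(G))$ is cooled under this strategy, the key claim to prove is that $t'(u) = t(u)$ for every $u \in V(G)$. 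Granted the claim, $v_k$ remains uncooled at the start of round $k$ in $\mathrm{ILT}(G)$, so round $k$ must actually be played, giving $\cool{\mathrm{ILT}(G)} \ge k = \cool{G}$.

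The inequality $t'(u) \le t(u)$ is immediate, since $G$ is an induced subgraph of $\mathrm{ILT}(G)$ and the chosen sources agree, so every propagation available in $G$ is available in $\mathrm{ILT}(G)$. I would prove the reverse inequality by strong induction on $T = t'(u)$. The bound is trivial for $T > k$ (since $t(u) \le k$) and for $u = v_T$, so suppose $u \in V(G)$ is cooled at round $T \le k$ by spreading from some neighbor $w$ of $u$ in $\mathrm{ILT}(G)$ with $t'(w) = T - 1$. If $w \in V(G)$, the inductive hypothesis gives $t(w) = T - 1$, and since $w \in N_G(u)$, propagation in $G$ yields $t(u) \le T$.

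The subtle case is when $w$ is a clone $v'$ with parent $v$. Because every source in rounds $1, \ldots, k$ is an original node, $v'$ was necessarily cooled by spread at round $T - 1$, so some original node $w' \in \{v\} \cup N_G(v)$ adjacent to $v'$ in $\mathrm{ILT}(G)$ satisfies $t'(w') = T - 2$ and hence, by induction, $t(w') = T - 2$. Since $u$ is also adjacent to $v'$, we have $u \in \{v\} \cup N_G(v)$, so $G$ contains a walk from $w'$ to $u$ of length at most two passing through $v$; two rounds of propagation in $G$ starting from $w'$ therefore cool $u$ by round $T$. The main obstacle is precisely this clone case, where the worry is that clones could provide shortcuts that cool originals faster in $\mathrm{ILT}(G)$ than in $G$; the proof hinges on the structural fact that every neighbor of a clone lies in the closed neighborhood of the clone's parent in $G$, which rules out any such shortcut.
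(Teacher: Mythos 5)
Your proposal is correct and, at the top level, takes the same route as the paper: replay an optimal cooling sequence of $G$ on the original copy of $V(G)$ inside $\mathrm{ILT}(G)$. Where you differ is in the verification. The paper argues via distances: since $d_{\mathrm{ILT}(G)}(u,v)=d_G(u,v)$ for original nodes, the sequence remains valid in $\mathrm{ILT}(G)$, and (in the case where the process on $G$ runs one round past its last source) the node cooled last is still at distance at least $k+1-i$ from each $v_i$, so it survives to round $k+1$. You instead prove the stronger invariant $t'(u)=t(u)$ for every original $u$ by induction on the cooling time, with the clone case resolved by the observation that every neighbor of a clone lies in $N_G[v]$ for its parent $v$ — which is exactly the structural fact underlying the paper's unproved distance-preservation step, so your write-up is in that sense more self-contained, and your invariant subsumes the paper's separate treatment of the extra final round. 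Two small points to tighten: first, you set $k=\cool{G}$ and speak of the source $v_k$, but the paper notes that $\cool{G}$ may exceed the length of the longest cooling sequence by one (the last round may have no available source); your invariant $t'(u)=t(u)$ still gives $\cool{\mathrm{ILT}(G)}\ge\cool{G}$ because some original $u$ has $t(u)=\cool{G}$, but the final sentence should be phrased that way rather than via $v_k$. Second, the step "the bound is trivial for $u=v_T$" tacitly assumes $v_T$ is still uncooled at round $T$ in $\mathrm{ILT}(G)$ so that it is actually chosen as the source; this does follow from your strong induction (a violation would produce an original with $t'<t$ at an earlier round), but you should say so explicitly, since the validity of the transferred sequence is the crux of the theorem.
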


\begin{proof}
 Let $(v_1,v_2,\ldots,v_k)$ be a maximum-length cooling sequence for $G$.
Since the distance between $v_i$ and $v_j$ is the same in both $G$ and in $\mathrm{ILT}(G)$, the sequence $(v_1,v_2,\ldots,v_k)$ is also a cooling sequence for $\mathrm{ILT}(G)$.
If $\mathrm{CL}(G)=k$, then as the cooling process on $\mathrm{ILT}(G)$ with the given cooling sequence lasted $k$ rounds, we have the result.

Otherwise, we may assume that $\mathrm{CL}(G)=k+1$ and that some node, say $v$, was one of the nodes cooled during the last round when cooling $G$.
Since $v$ must have a distance at least $k+1-i$ from $v_i$ in $G$ for this to occur, and since the distance from $v$ to $v_i$ is the same as this in $\mathrm{ILT}(G)$, it follows that $v$ must have a distance at least $k+1-i$ from $v_i$ in $\mathrm{ILT}(G)$.
However, this implies that at the end of round $k$ of the cooling process on $\mathrm{ILT}(G)$, node $v$ has not yet been cooled.
Thus, the cooling process on $\mathrm{ILT}(G)$ lasts at least $k+1 =\mathrm{CL}(G)$ rounds, and so the proof is complete.

\end{proof}

We next investigate the cooling number of the graphs $\mathrm{ILT}_t(P_n)$, where $t\geq 1$ and $P_n = (p_1,\ldots,p_n)$ is the path graph on $n$ nodes. See Figure \ref{fig:ILTP6-cooling-example} for an illustration of $\mathrm{ILT}_1(P_6)$. For $1 \leq k \leq t$, the $k$-th \textit{layer set} of $p_i$ is the set $L_k(p_i) = L_{k-1}(p_i) \cup \text{CLONE}(L_{k-1}(p_i))$, where $L_0(p_i) = \{p_i\}$. When $k=t$, we write $L(p_i)$ instead.

\begin{figure}[t]
\centering
\begin{tikzpicture}
\begin{scope}[every node/.style={circle,draw,fill=black,inner sep=0pt,minimum size=5pt}]
    \node (1) at (0,0) {};
    \node (2) at (2,0) {};
    \node (3) at (4,0) {};
    \node (4) at (6,0) {};
    \node (5) at (8,0) {};
    \node (6) at (10,0) {};
    \node (12) at (0,2) {};
    \node (22) at (2,2) {};
    \node (32) at (4,2) {};
    \node (42) at (6,2) {};
    \node (52) at (8,2) {};
    \node (62) at (10,2) {};
\end{scope}

\begin{scope}[>={Stealth[black]},
              every node/.style={fill=black,circle},
              every edge/.style={draw=black,very thick}]
    \path [-] (1) edge (2);
    \path [-] (2) edge (3);
    \path [-] (3) edge (4);
    \path [-] (4) edge (5);
    \path [-] (5) edge (6);
    \path [-] (1) edge (12);
    \path [-] (1) edge (22);
    \path [-] (2) edge (22);
    \path [-] (2) edge (32);
    \path [-] (3) edge (32);
    \path [-] (3) edge (42);
    \path [-] (4) edge (42);
    \path [-] (4) edge (52);
    \path [-] (5) edge (52);
    \path [-] (5) edge (62);
    \path [-] (6) edge (62);
    \path [-] (2) edge (12);
    \path [-] (3) edge (22);
    \path [-] (4) edge (32);
    \path [-] (5) edge (42);
    \path [-] (6) edge (52);
    \path[draw=BLUE_D] (12) circle[radius=0.2];
    \path[draw=BLUE_D] (22) circle[radius=0.2];
    \path[draw=BLUE_D] (42) circle[radius=0.2];
    \path[draw=BLUE_D] (52) circle[radius=0.2];
\end{scope}

\begin{scope}
    \coordinate (1) at (0,0);
    \node at (1) [below = 2mm of 1,BLUE_D] {$2$};
    \coordinate (2) at (2,0);
    \node at (2) [below = 2mm of 2,BLUE_D] {$2$};
    \coordinate (3) at (4,0);
    \node at (3) [below = 2mm of 3,BLUE_D] {$3$};
    \coordinate (4) at (6,0);
    \node at (4) [below = 2mm of 4,BLUE_D] {$4$};
    \coordinate (5) at (8,0);
    \node at (5) [below = 2mm of 5,BLUE_D] {$4$};
    \coordinate (6) at (10,0);
    \node at (6) [below = 2mm of 6,BLUE_D] {$5$};
    \coordinate (12) at (0,2);
    \node at (12) [above = 2mm of 12] {$1$};
    \coordinate (22) at (2,2);
    \node at (22) [above = 2mm of 22] {$2$};
    \coordinate (32) at (4,2);
    \node at (32) [above = 2mm of 32,BLUE_D] {$3$};
    \coordinate (42) at (6,2);
    \node at (42) [above = 2mm of 42] {$3$};
    \coordinate (52) at (8,2);
    \node at (52) [above = 2mm of 52] {$4$};
    \coordinate (62) at (10,2);
    \node at (62) [above = 2mm of 62,BLUE_D] {$5$};
\end{scope}
\end{tikzpicture}
\caption{An example of cooling on $\mathrm{ILT}(P_6)$. Black
labels indicate the nodes of the cooling sequence in increasing order. Blue
labels indicate the round that the corresponding node was cooled.}
\label{fig:ILTP6-cooling-example}
\end{figure}

\begin{theorem}
If $n \geq 3$ and $t \ge 1$, then
    $$\mathrm{CL}(\mathrm{ILT}_t(P_n)) =
    \begin{cases}
        \left\lceil 2n/3 \right\rceil   & \text{ if }t = 1 \normalfont{\text{ and }} n \equiv 2 \pmod{n};\\
        \left\lceil 2n/3 \right\rceil  +1& \normalfont{\text{ otherwise}.}
    \end{cases}
    $$
\end{theorem}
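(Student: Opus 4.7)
The plan is to recast the cooling process on $\mathrm{ILT}_t(P_n)$ as a covering problem on the cluster indices $\{1,\ldots,n\}$. I would first enumerate the relevant pairwise distances in $\mathrm{ILT}_t(P_n)$: spine-to-anything distances equal the spine distance, while clone-to-clone distances between adjacent clusters equal $2$ rather than $1$ (the key non-trivial correction, since two clones of adjacent spine vertices are not themselves adjacent). From this, for a source placed at round $s$ at spine position $a_s$, I would derive the round by which each cluster $L(p_j)$ is fully cooled, yielding a formula for the fully cooled set $F_T$ at round $T$ as a union of intervals centered at the $a_s$ with radii depending on $T-s$ and on the source type (spine vs. clone). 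The relevant inclusion, for clone sources, reads roughly $F_T \supseteq \bigcup_{s \leq T-2}[a_s-(T-s), a_s+(T-s)] \cup \{a_{T-1}\}$, with an extra correction when the terminal source's cluster has its spine node cooled by spread from earlier sources.

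For the lower bound, I would exhibit the explicit clone cooling sequence at positions $(a_1,a_2,a_3,a_4,\ldots) = (1,2,4,5,7,8,\ldots)$, skipping every third spine position, with endgame modifications dictated by $n \bmod 3$. Verifying the source-separation constraints $|a_s - a_{s'}| \geq s-s'+1$ (required for $s-s' \geq 2$ in the clone case), and checking that $F_T = \{1,\ldots,n\}$ while $F_{T-1} \neq \{1,\ldots,n\}$, shows the process runs $T = \lceil 2n/3\rceil + 1$ rounds generically, and $T = \lceil 2n/3\rceil$ in the exceptional case $t=1$, $n \equiv 2 \pmod 3$. For the complementary case $t \geq 2$ with $n \equiv 2 \pmod 3$, I would instead place the terminal source at a non-universal clone such as $p_i^{(2)}$ in the final cluster, which exists only because $|L(p_i)| \geq 4$; such a clone requires one extra round to fully cool its own cluster, buying the missing round. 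Theorem~\ref{thm:ILT_only_increases} then lifts this $t=2$ lower bound to every $t \geq 2$.

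For the upper bound, I would show any cooling sequence satisfies $T \leq \lceil 2n/3\rceil + 1$ (with strict inequality in the exceptional case). From the covering formula for $F_T$ combined with the source-separation constraint, a packing estimate on the intervals of radii $T-1,T-2,\ldots,2$ plus one singleton bounds the total useful coverage by roughly $\tfrac{3}{2}(T-1) + O(1)$, forcing $n \leq \tfrac{3}{2}(T-1) + O(1)$, which rearranges to the claimed upper bound. The main obstacle will be the exceptional case $t=1$, $n \equiv 2 \pmod 3$: I must rule out \emph{every} source choice, including mixed spine–clone sequences, from attaining one additional round. This requires showing that the small cluster size $|L(p_i)| = 2$ leaves no room at the endgame: once the two nodes of a boundary cluster have been touched, the cluster is forced to fully cool within one more round, whereas for $t \geq 2$ a non-universal clone provides the extra slack that delays the process by exactly one round.
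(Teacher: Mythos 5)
Your plan follows essentially the same route as the paper's proof: both reduce the problem to the spine indices via the layer sets $L(p_j)$, both rest on the distance computation $d(p_i',p_j')=\max(2,|i-j|)$ (your point that clones of adjacent spine nodes are at distance $2$, not $1$, is exactly the paper's key observation), and your lower-bound sequence at positions $1,2,4,5,7,8,\ldots$ is literally the paper's sequence $w_i=p'_{i+\lfloor(i-1)/2\rfloor}$; your endgame for $t\ge 2$ (terminating at a non-universal clone of the last layer set) is a harmless variant of the paper's remark that $\mathrm{CLONE}(L_{t-1}(p_n))\setminus\{p_n'\}$ is nonempty. Two execution points deserve attention. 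First, for the upper bound the paper avoids any interval-packing estimate with an $O(1)$ error: it notes that every node of $L(p_j)\cup L(p_{j+1})\cup L(p_{j+2})$ is adjacent to $p_{j+1}$, so any two such nodes are at distance at most $2$ and at most two sources can land in any three consecutive layer sets, which summed over $\lfloor n/3\rfloor$ triples plus the leftover sets gives exactly $\lceil 2n/3\rceil$. Since the theorem distinguishes its cases by a single round, you will need that sharp count; a bound of the form $\tfrac{3}{2}(T-1)+O(1)$ is not enough as stated. Second, you correctly isolate the genuine difficulty: when $t=1$ and $n\equiv 2\pmod 3$ one must show that \emph{no} maximum-length sequence leaves an uncooled node after round $\lceil 2n/3\rceil$, and your proposed mechanism (a size-$2$ layer set cannot delay completion once touched) is the right intuition but not yet an argument, since one must analyze all tight source configurations rather than only the boundary cluster. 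It is worth noting that the paper's own written proof is thinnest at exactly this point, as it only verifies its particular sequence in that case, so your proposal is, if anything, more explicit about what remains to be established there.
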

\begin{proof}
Let $G=\mathrm{ILT}_t(P_{n})$ and let $(v_1,\ldots,v_k)$ be a cooling sequence for $G$.
We begin with an upper bound on $k$.
As the layer sets $\{L(p_i)\}$ form a partition of $V(G)$, each element $v_i$ of the cooling sequence is in exactly one $L(p_j)$, for some $1 \leq j \leq n$.
We claim that any three consecutive layer sets, those of the form $L(p_j), L(p_{j+1}), L(p_{j+2})$, can contain at most two sequence elements total.

Indeed, suppose that $v,w$ are nodes belonging to any of these three layer sets.  Since $p_j$ and $p_{j+2}$ are adjacent to $p_{j+1}$ in $P_{n}$ and by definition of the ILT process, $v$ and $w$ are adjacent to $p_{j+1}$ in $G$.
As such, $d(v,w) \leq 2$.
If a node $u$ in any of those layer sets were cooled, all three layer sets would be fully cooled in two rounds.
In each round, the selection of a node in the cooling sequence happens after the cooling has spread from the nodes that were cool by the end of the previous round, so only one node in the three layer sets could be selected after $u$ was cooled, proving the claim.
Thus, as there are $n$ layer sets, we can separate them into $\lfloor n/3\rfloor$ consecutive triples, each of which have at most two cooled members.
If $n$ is not divisible by 3, then the remaining layer sets may also each have a cooled element.
Now we have that $k \leq \left\lceil 2n/3 \right\rceil$.

This upper bound is also achieved, which we show by constructing a cooling sequence $(w_1,\ldots,w_k)$ of the desired length.
Let the clone of the node $p_i \in V(\mathrm{ILT}_{t-1}(P_n))$ that was created in the $t$-th application of the ILT process be denoted $p_i'$.
For $i \geq 1$, define $w_i = p_{i+\lfloor (i-1)/2 \rfloor}'$.
This sequence has length $\left\lceil 2n/3 \right\rceil$, and we claim it is a cooling sequence.
Indeed, for any $1 \leq i< j \leq n$ we have that $d(p_i',p_j') = \max(2,|i-j|)$, as the path $(p_i',p_{i+1},p_{i+2},\ldots,p_{j-1},p_j')$ is of this length and any path between nodes in layer sets $L(p_i)$ and $L(p_j)$ must cross through every layer set $L(p_k)$ with $i < k < j$, implying no shorter path can exist.
Thus, the sequence is a cooling sequence of length $\left\lceil 2n/3 \right\rceil$, and as this matches the upper bound, this cooling sequence is optimal.

We have shown that the length of an optimal cooling sequence of $G$ is $\left\lceil 2n/3 \right\rceil$. However, it is possible that the cooling process could last one round longer: this breaks into two cases, with whose study we conclude the proof.

\vspace{0.1in}

\noindent \emph{Case 1:} $t = 1$  and $n \equiv 2 \pmod{3}$. Consider the cooling sequence $(w_1,w_2,\ldots,w_k)$ as defined above.
As $n \equiv 2 \pmod{3}$, the final cooled node $w_k$ is the node $p_n'$, and the node $p_n$ is cooled at the beginning of round $k$. Thus, there is no additional round of cooling, showing that $\mathrm{CL}(G) = \left\lceil 2n/3 \right\rceil.$

\vspace{0.1in}

\noindent \emph{Case 2:} Otherwise. Consider the cooling sequence $(w_1,w_2,\ldots,w_k)$ as defined above.
Here, we have that either $w_k\neq p_n'$ or that $\mathrm{CLONE}(L_{t-1}(p_n))\setminus \{p_n'\}$ is nonempty. This implies that after the cooling of $w_k$, there is at least one uncooled element of $L(p_n)$, and an extra round of cooling is needed.
 \end{proof}

The next result shows that the second time-step determines the cooling number of ILT graphs.

\begin{theorem}\label{thrm:iltcooling}
For any graph $G$, the maximum length of a cooling sequence in $\mathrm{ILT}_2(G)$ and $\mathrm{ILT}_t(G)$ are the same.
\end{theorem}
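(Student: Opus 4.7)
The plan is to establish the two inequalities $\mathrm{CL}(\mathrm{ILT}_t(G)) \ge \mathrm{CL}(\mathrm{ILT}_2(G))$ and $\mathrm{CL}(\mathrm{ILT}_t(G)) \le \mathrm{CL}(\mathrm{ILT}_2(G))$ separately for each $t \ge 2$. The first is immediate from iterating Theorem~\ref{thm:ILT_only_increases} a total of $t-2$ times, writing $\mathrm{ILT}_t(G) = \mathrm{ILT}(\mathrm{ILT}_{t-1}(G))$.

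For the reverse inequality, the strategy is to take an optimal cooling sequence $(w_1,\ldots,w_k)$ in $\mathrm{ILT}_t(G)$ and convert it into a length-$k$ cooling sequence $(w_1',\ldots,w_k')$ in $\mathrm{ILT}_2(G)$. Each node $u$ of $\mathrm{ILT}_t(G)$ has a canonical \emph{original ancestor} $\phi(u)\in V(G)$, and I write $D_t(v):=\phi^{-1}(v)$ for its fiber. The first substantive step is a structural lemma, proved by induction on $t$: the subgraph of $\mathrm{ILT}_t(G)$ induced by $D_t(v)$ has diameter at most $2$ with $v$ as a dominating vertex, and for distinct originals $v,v'$ with $d_G(v,v')\ge 2$, every pair $(u,u')\in D_t(v)\times D_t(v')$ satisfies $d_{\mathrm{ILT}_t(G)}(u,u')=d_G(v,v')$. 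Combined with the cooling-sequence distance condition $d(w_i,w_j)\ge |i-j|$, this forces the index set $I_v := \{i:\phi(w_i)=v\}$ to lie in at most three consecutive rounds.

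I would then build $w_i'\in D_2(\phi(w_i))$ by cases on $|I_v|$: when $|I_v|\le 2$, any distinct choice of representatives in $D_2(v)$ works; when $|I_v|=3$ with $I_v=\{i_0,i_0+1,i_0+2\}$, I place the unique non-adjacent pair of $D_2(v)$ at positions $i_0$ and $i_0+2$ and take $v$ (or its step-$1$ clone) at position $i_0+1$. The structural lemma makes every inter-fiber condition automatic whenever $d_G(\phi(w_i),\phi(w_j))\ge 2$.

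The main obstacle is the remaining case, in which $\phi(w_i)$ and $\phi(w_j)$ are adjacent in $G$ and $|i-j|=2$: here we need $w_i'$ and $w_j'$ to be non-adjacent in $\mathrm{ILT}_2(G)$. To handle it, I would explicitly compute the adjacency pattern between $D_2(v)$ and $D_2(v')$ for $vv'\in E(G)$; one finds that the step-$2$ clone of the step-$1$ clone, call it $v_{12}$, is non-adjacent in $\mathrm{ILT}_2(G)$ to three of the four nodes of $D_2(v')$. The representative assignment above can then be refined so that neighbouring fibers consistently use these ``safe'' clone types, guaranteeing non-adjacency whenever $|i-j|=2$. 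Verifying the remaining sub-cases yields a valid length-$k$ cooling sequence in $\mathrm{ILT}_2(G)$, which establishes the desired inequality and finishes the proof.
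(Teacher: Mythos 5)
Your overall architecture matches the paper's: the easy inequality comes from the fact that $\mathrm{ILT}_2(G)$ sits isometrically inside $\mathrm{ILT}_t(G)$ (as in Theorem~\ref{thm:ILT_only_increases}), and the hard inequality projects an optimal sequence of $\mathrm{ILT}_t(G)$ onto representatives chosen in the fibers $D_2(v)$. Your structural lemma (each fiber is dominated by $v$ and has diameter at most $2$; cross-fiber distances equal $d_G(v,v')$ once $d_G(v,v')\geq 2$) is correct and is the right tool. One preliminary caution: the theorem concerns the maximum \emph{length of a cooling sequence}, not $\cool{\cdot}$ itself; Corollary~\ref{cor:ILT} explicitly permits $\cool{\mathrm{ILT}_t(G)}=\cool{\mathrm{ILT}_2(G)}+1$, so the inequality $\cool{\mathrm{ILT}_t(G)}\leq\cool{\mathrm{ILT}_2(G)}$ you announce is not the statement to prove (your construction in fact addresses sequence lengths, so this is a notational slip rather than a logical one).

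The genuine gaps are in the bookkeeping of which constraints arise. A valid cooling sequence must satisfy $d(w_i,w_j)\geq j-i+1$ for $i<j$, since the spread at the start of round $j$ precedes the choice of $w_j$. Because each fiber has diameter at most $2$, two sources in the same fiber must have \emph{consecutive} indices, so $|I_v|\leq 2$ always; your $|I_v|=3$ case is vacuous (and as described it would fail anyway, since the node $v$ placed at position $i_0+1$ is adjacent to both of its neighbours in the sequence). More seriously, your claim that for $|I_v|\leq 2$ ``any distinct choice of representatives works'' is false: when $I_v=\{i_0,i_0+1\}$ the two representatives must be non-adjacent, and $D_2(v)$ induces $K_4$ minus an edge, so the only admissible pair is the two clones created in the second iteration --- exactly the pair $x',x''$ used in the paper. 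Finally, your ``main obstacle'' ($\phi(w_i)$ adjacent to $\phi(w_j)$ in $G$ with $|i-j|=2$) cannot occur: every node of $D_t(v')$ is adjacent to $v$ whenever $vv'\in E(G)$, so all cross-distances between adjacent fibers are at most $2$ in $\mathrm{ILT}_t(G)$, which already forces $|i-j|\leq 1$ in the original sequence; had the case occurred, non-adjacency would not suffice, as distance at least $3$ would be required. The case that actually needs your ``safe clone'' computation is $|i-j|=1$ with $\phi(w_i)$ and $\phi(w_j)$ equal or adjacent, and there your observation that the second-generation clones are pairwise non-adjacent within and across adjacent fibers does close the argument. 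The proof is therefore repairable by always drawing representatives from the latest generation of clones, which is precisely the paper's construction; the paper then verifies validity by lifting a hypothetical violating path back to $\mathrm{ILT}_t(G)$ rather than by explicit adjacency case analysis.
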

\begin{proof}
Suppose that $(u_1, u_2,\ldots, u_d)$ is an optimal cooling sequence in $\mathrm{ILT}_t(G)$.
Given a node $u$ in $\mathrm{ILT}_t(G)$, let $f(u)$ be the node in $G$ that was cloned to form $u$.
Since $t \geq 2$, for each $x \in V(G)$, there are at least two distinct clone nodes in $\mathrm{ILT}_t(G)$ that were created in the latest iteration of the $\mathrm{ILT}$ process, say $x'$ and $x''$, such that $f(x')=f(x'')=x$.

We define a sequence of nodes in $\mathrm{ILT}_2(G)$ of length $d$, $(v_1, \ldots, v_d)$, as follows.
If $f(u_i) \neq f(u_{i-1})$, then define $v_i = f(u_i)'$; otherwise, define $v_i = f(u_i)''$.
We will show that this is an optimal cooling sequence for $\mathrm{ILT}_2(G)$.

To begin, we show that this is a valid cooling sequence.
Assume for a contradiction that this sequence is not a valid cooling sequence. As such, there must be some $v_j$ that will already be cool when we try to cool it on round $j$. For this to have occurred, there must some $i<j$ and some path from $v_i$ to $v_j$ of length at most $j-i$, say $(p_i, p_{i+1}, \ldots, p_j)$, such that node $p_r$ was cooled on round $r$ for each $r\in \{i, i+1, \ldots, j\}$.
The walk $(f(p_i), f(p_{i+1}), \ldots, f(p_j))$ in $\mathrm{ILT}_2(G)$ has length $j-i$, and so contains a path of length $r' \leq j-i$.
For simplicity, we may assume that $(f(p_i), f(p_{i+1}), \ldots, f(p_j))$ is a path in $\mathrm{ILT}_2(G)$.
Since $f(p_i) = f(v_i) = f(u_i)$ and $f(p_j) = f(v_j)=f(u_j)$, it follows that $(u_i, f(p_{i+1}), \ldots, f(p_{j-1}), u_j)$ is a path of length $j-i$ in $\mathrm{ILT}_t(G)$. Since $u_i$ was cooled in round $i$ while cooling the graph $\mathrm{ILT}_t(G)$, and the cooling spreads to each uncooled neighbor over each round, it follows that $u_j$ must have been cooled within $j-i$ rounds. Then $u_j$ was cooled by round $i+(j-i) = j$, so $u_j$ was already cooled by round $j$.  However, $(u_1, u_2,\ldots, u_d)$ is a cooling sequence in $\mathrm{ILT}_t(G)$, and so we have the required contradiction.

To show that the cooling sequence $(v_1, \ldots, v_d)$ in $\mathrm{ILT}_2(G)$ is optimal, assume that there is some cooling sequence $(v_1, \ldots, v_{d+1})$ in $\mathrm{ILT}_2(G)$. This sequence of nodes is also a sequence of nodes in $\mathrm{ILT}_t(G)$, and further, the distance between any of these nodes is the same in both $\mathrm{ILT}_2(G)$ and $\mathrm{ILT}_t(G)$.
In $\mathrm{ILT}_t(G)$, since no cooling sequence can have length $d+1$, there must be a pair of nodes $v_i$ and $v_j$, $i<j$, with $d(v_i,v_j)\leq j-i$.
 (This follows by a similar argument to the first part of the proof.)
We then have that $v_j$ will already be cooled by the time we try to cool it on turn $j$, contradicting that this is a cooling sequence.
Thus, the maximum length of a cooling sequence in $\mathrm{ILT}_2(G)$ is $d$, and the proof follows.
\end{proof}

If a graph $G$ has a maximum cooling sequence of length $s$, then $\cool{G}\in \{s,s+1\}$. Theorem~\ref{thm:ILT_only_increases} assures us that applying the ILT process only increases the cooling value. We thus have the following result proving that, as in the case of burning, the cooling number remains bounded by a constant throughout the ILT process.

\begin{corollary}\label{cor:ILT}
For any graph $G$, we have that $$\cool{\mathrm{ILT}_2(G)} \le \cool{\mathrm{ILT}_t(G)} \le \cool{\mathrm{ILT}_2(G)}+1.$$
\end{corollary}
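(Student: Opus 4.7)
The plan is to simply assemble the pieces the paper has already placed in front of us. Fix a graph $G$ and let $s$ denote the common maximum length of a cooling sequence in $\mathrm{ILT}_2(G)$ and in $\mathrm{ILT}_t(G)$; the equality of these two quantities is exactly the content of Theorem~\ref{thrm:iltcooling}. The central observation we need, already stated just above the corollary, is that for any graph $H$ the cooling number satisfies $\cool{H}\in\{s_H,s_H+1\}$ where $s_H$ is the length of a longest cooling sequence in $H$. The reason is that each round consumes at most one new source, so a process lasting $r$ rounds produces a cooling sequence of length either $r$ (if sources are still available in the last round) or $r-1$ (if the final round only spreads cooling to the last uncooled nodes), giving $s_H\le \cool{H}\le s_H+1$.

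For the lower bound of the corollary, I would iterate Theorem~\ref{thm:ILT_only_increases}: since $\mathrm{ILT}_t(G) = \mathrm{ILT}(\mathrm{ILT}_{t-1}(G))$, repeated application for $t\ge 2$ yields
\[
\cool{\mathrm{ILT}_2(G)}\le \cool{\mathrm{ILT}_3(G)} \le \cdots \le \cool{\mathrm{ILT}_t(G)}.
\]

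For the upper bound, I would apply the $\{s,s+1\}$ dichotomy to both $\mathrm{ILT}_2(G)$ and $\mathrm{ILT}_t(G)$ using the same value $s$ supplied by Theorem~\ref{thrm:iltcooling}. This gives $\cool{\mathrm{ILT}_2(G)}\ge s$ and $\cool{\mathrm{ILT}_t(G)}\le s+1$, from which
\[
\cool{\mathrm{ILT}_t(G)}\le s+1 \le \cool{\mathrm{ILT}_2(G)}+1,
\]
completing the proof.

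Honestly there is no significant obstacle: the heavy lifting was done in Theorem~\ref{thrm:iltcooling} (which identifies the longest cooling sequence length across all iterations $t\ge 2$) and in Theorem~\ref{thm:ILT_only_increases} (which provides monotonicity of $\cool{\cdot}$ under the ILT operation). The only delicate point worth stating carefully is the elementary gap lemma $\cool{H}\in\{s_H,s_H+1\}$, since it is the mechanism by which a statement about sequence lengths is upgraded to a statement about cooling numbers; once it is in hand, the corollary is a two-line combination of the preceding theorems.
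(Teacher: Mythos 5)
Your proposal is correct and follows essentially the same route as the paper, which derives the corollary from Theorem~\ref{thrm:iltcooling} together with the observation that $\cool{H}\in\{s_H,s_H+1\}$ and the monotonicity supplied by Theorem~\ref{thm:ILT_only_increases}. Your explicit justification of the gap lemma and the iterated application of monotonicity are exactly the (implicit) argument the paper intends.
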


In particular, Theorem~\ref{thm:ILT_only_increases} and Corollary~\ref{cor:ILT} together imply that for every graph $G$, either $\cool{\mathrm{ILT}_t(G)} = \cool{\mathrm{ILT}_2(G)}$ for all $t\geq 2$, or there exists a threshold value $t_0$ such that the cooling number of $\mathrm{ILT}_t(G)$ is equal to $\cool{\mathrm{ILT}_2(G)}+1$ if and only if $t \geq t_0$.
\section{Conclusion and further directions}

We introduced the cooling number of a graph, which quantifies the spread of a slow-moving contagion in a network. We gave tight bounds on the cooling number as functions of the order and diameter of the graph. Using isoperimetric techniques, we determined the cooling number of Cartesian grids. The cooling number of ILT graphs was considered in the previous section.

Several questions remain on the cooling number. Determining the exact value of the cooling number in various graph families, such as spiders and, more generally, trees, remains open. In the full version of the paper, we will consider the cooling number of other grids, such as strong or hexagonal grids. We want to classify the cooling number of ILT graphs where the initial graphs are not paths. Another direction to consider is the complexity of cooling, which is likely \textbf{NP}-hard.

\section{Acknowledgments}
Research supported by a grant of the first author from NSERC.

\end{document}